\documentclass[a4paper,10pt]{amsart}

\usepackage{acatta}

\newcommand{\LC}{\nabla^{LC}}
\newcommand{\ccon}{\nabla^c}
\newcommand{\scal}{\operatorname{scal}}
\newcommand{\Ric}{\operatorname{Ric}}
\newcommand{\const}{\text{const}}

\title[Kodaira dimension of almost K\"ahler manifolds and curvature]{Kodaira dimension of almost K\"ahler manifolds and curvature of the canonical connection}

\author{Andrea Cattaneo}

\author{Antonella Nannicini}
\address{Dipartimento di Matematica ed Informatica ``U. Dini''\\
Universit\`a degli Studi di Firenze\\
Viale Morgagni 67/A\\
50134 Firenze, Italy}
\email{andrea.cattaneo@unifi.it}
\email{antonella.nannicini@unifi.it}

\author{Adriano Tomassini}
\address{Dipartimento di Scienze Matematiche, Fisiche ed Informatiche
Unit\`a di Matematica e Informatica\\
Universit\`a di Parma\\
Parco Area delle Scienze 53/A, 43124\\
Parma, Italy}
\email{adriano.tomassini@unipr.it}

\keywords{Kodaira dimension, almost complex manifolds, almost K\"ahler manifolds, canonical connection.}
\thanks{This work was partially supported by the Project PRIN ``Varietà reali e complesse: geometria, topologia e analisi armonica'' and by GNSAGA of INdAM. Andrea Cattaneo is also supported by the `Grant de Bartolomeis', a fellowship in memory of Prof. Paolo de Bartolomeis.}
\subjclass[2010]{53C55, 53C25}

\begin{document}

\begin{abstract}
The notion of Kodaira dimension has recently been extended to general almost complex manifolds. In this paper we focus on the Kodaira dimension of almost K\"ahler manifolds, providing an explicit computation for a family of almost K\"ahler threefolds on the differentiable manifold underlying a Nakamura manifold. We concentrate also on the link between Kodaira dimension and the curvature of the canonical connection of an almost K\"ahler manifold, and show that in the previous example (and in another one obtained from a Kodaira surface) the Ricci curvature of the almost K\"ahler metric vanishes for all the members of the family.
\end{abstract}

\maketitle

\tableofcontents

\section{Introduction}

When studying complex manifolds, one of the first invariants one can attach to any given complex manifold is its Kodaira dimension. This invariant captures the geometry of the manifold $X$ under consideration expressing the rate of growth of the plurigenera $P_m(X) = \dim_\IC H^0(X, \cK_X^{\tensor m})$ with respect to $m$. The definition of the Kodaira dimension has recently been extended by Chen--Zhang in the setting of almost complex manifolds (see \cite{Chen-Zhang}). Among the main points addressed in this paper, there are two which, according to us, deserve particular attention: first of all the proof that the the spaces of sections of the pluricanonical bundles $\cK_X^{\tensor m}$ are finite dimensional, and then the attention one must pay to properly define what a pseudoholomorphic pluricanonical section is. Regarding these points, up to now the state of the art does not provide tools for the actual computations of the spaces of pluricanonical sections other than the definitions, which makes the determination of the Kodaira dimension of an almost complex manifold extremely challenging.

The aim of the present note is to show some of the features of this extended version of the Kodaira dimension, focussing in particular in the case of (non-integrable) almost K\"ahler manifolds. We present some results in complex dimension $2$ and $3$: we endow the differentiable manifolds underlying a Kodaira--Thurston surface and a completely solvable Nakamura threefold with families of almost complex structures and Riemannian metrics turning them into families of almost K\"ahler manifolds. In particular, we prove the following

\begin{thm*}[{Theorem \ref{thm: kod dim}}]
There exist a family of almost complex structures $J_t$ with $t = (t_1, t_2, t_3, t_4) \in \IR^4$ on the differentiable manifold $N$ underlying the Nakamura threefold such that
\[\kappa^{J_t}(N) = \left\{ \begin{array}{ll}
0 & \text{if } t_4 = 0,\\
-\infty & \text{if } t_4 \neq 0.
\end{array} \right.\]
\end{thm*}

It is known that almost Hermitian manifolds carry a canonical connection on their tangent bundle (in the integrable case, it is the Chern connection). Our second aim is to study the relationship between the curvature of the canonical connection and the Kodaira dimension. In the integrable case, a theorem of Yau (see \cite[Corollary 2]{Yau}) states that on a compact K\"ahler manifold the positivity of the total scalar curvature of the Chern connection forces the Kodaira dimension of the manifold to be $-\infty$; a generalization of this result for almost Hermitian manifolds is provided in \cite[Theorem 1.1]{Yang2}, \cite[Theorem 1.3]{Yang} and \cite[Proposition 9.5]{Chen-Zhang}. Our results show that the opposite implication does not hold in general: by computing explicitly the scalar curvature of the canonical connection of our examples, we find that it is possible for an almost K\"ahlelr manifold to have vanishing scalar curvature and Kodaira dimension $0$. More precisely, we prove the following

\begin{thm*}[{Theorem \ref{thm: scal kodaira} and \ref{thm: scal nakamura}}]
There exist families $X_a$ and $Y_t$ of almost K\"ahler manifolds (with $a \in \IR \smallsetminus \set{0}$ and $t \in \IR^4$) whose members have Kodaira dimension $-\infty$ on a dense subset of the parameter space, and whose canonical connection $\ccon$ has $\Ric(\ccon) \equiv 0$ (hence also $\scal(\ccon) \equiv 0$).
\end{thm*}

A final outcome of our work can be obtained by combining the previous two results. As we mentioned, the different members of the families we consider have different Kodaira dimension and vanishing scalar curvature. More in detail, all the members have Kodaira dimension $-\infty$ except those on a subvariety of the parameter space where the Kodaira dimension jumps to $0$; on the other hand, for all the members of these families the reason why the scalar curvature vanishes is that the canonical connection has trivial Ricci tensor. Hence we show also that in the almost K\"ahler case it is possible for a manifold to have vanishing Ricci curvature (hence trivial first Chern class) but Kodaira dimension $-\infty$.

The structure of the paper is as follows. In Section \ref{sect: kod dim general} we recall the definition of Kodaira dimension for almost complex manifolds from \cite{Chen-Zhang}. In Section \ref{sect: recaps} we collect some known results concerning the canonical connection on an almost complex manifold and its Ricci and scalar curvature, focussing in particular on the case of almost K\"ahler manifolds. In Section \ref{sect: kodaira manifold} we compute the curvature of the canonical connection on a family of almost Kahler structures on the family of almost complex manifolds introduced in \cite[$\S$6.1]{Chen-Zhang} on the differentiable manifold underlying a Kodaira surface, showing our first main result (Theorem \ref{thm: scal kodaira}). In Section \ref{sect: nakamura threefold} we consider the differentiable manifold underlying a Nakamura threefold, and endow it with a family of almost K\"ahler structures: in Section \ref{sect: kod dim of nakamura def} we compute the Kodaira dimension of these almost complex manifolds, and prove that it can assume the values $0$ or $-\infty$ (Theorem \ref{thm: kod dim}); finally in Section \ref{sect: curv of nakamura def} we show that the Ricci and scalar curvature of the almost K\"ahler metrics on the member of this family always vanish (Theorem \ref{thm: scal nakamura}).

\begin{ack}
The authors express their gratitude to Weiyi Zhang for having introduced them to the subject of Kodaira dimension for almost complex manifolds. We also thank Tian-Jun Li for having brought to our attention the reference \cite{Li} and Valentino Tosatti for his comments on a previous version of this paper.
\end{ack}

\section{Kodaira dimension of almost complex manifolds}\label{sect: kod dim general}

Let $(M, J)$ be a compact $2n$-dimensional smooth manifold endowed with an almost complex structure $J$. Following \cite{Chen-Zhang}, we recall briefly the definition of \emph{Kodaira dimension} of $(M, J)$.

Let $\Lambda_J^{p, q} M$ be the bundle of $(p, q)$-forms on $(M, J)$ and denote by $\Omega_J^{p, q}(M) = \Gamma(M, \Lambda_J^{p, q} M)$ the space of $(p, q)$-forms on $(M, J)$. Denote by $d$ the exterior differential, then
\[d(\Omega_J^{p, q}(M)) \subset \Omega_J^{p + 2, q - 1}(M) + \Omega_J^{p + 1, q}(M) + \Omega_J^{p, q + 1}(M) + \Omega_J^{p - 1, q + 2}(M).\]
Consequently $d$ splits as
\[d = A_J + \del_J + \delbar_J + \overline{A}_J,\]
where $A_J = \pi^{p + 2, q - 1} \circ d$, $\delbar_J = \pi^{p, q + 1} \circ d$. Let $\cK_X = \Lambda_J^{n, 0} M$ be the \emph{canonical bundle} of the almost complex manifold $X = (M, J)$. Then $\cK_X$ is a complex line bundle over $X$ and the $\delbar_J$-operator on $(M, J)$ gives rise to a pseudoholomorphic structure on $\cK_X$, i.e., a differential operator still denoted by $\delbar_J$,
\[\delbar_J: \Gamma(M, \cK_X) \to \Gamma(M, T^*M^{0,1} \tensor \cK_X)\]
satisfying the Leibniz rule
\[\delbar_J (f \sigma) = \delbar_J f \tensor \sigma + f \delbar_J \sigma,\]
for every smooth function $f$ and section $\sigma$.

By Hodge Theory (see \cite[Theorem 1.1]{Chen-Zhang}), $H^0(M,\cK_X^{\tensor m})$ is a finite dimensional complex vector space for every $m \geq 1$.

\begin{defin}[{\cite[Definition 1.2]{Chen-Zhang}}]
The \emph{$m^{\text{th}}$-plurigenus} of $(M, J)$ is defined as
\begin{equation}\label{plurigenus}
P_m(M, J) := \dim_\IC H^0(M,\cK_X^{\otimes m}).
\end{equation}
The \emph{Kodaira dimension} of $(M, J)$ is defined as
\begin{equation}\label{kodaira-dimension}
\kappa^J(M) := \left\{
\begin{array}{ll}
-\infty & \text{if } P_m(J) = 0 \text{ for every } m \geq 1,\\
\displaystyle \limsup_{m \to +\infty} \frac{\log P_m(J)}{\log m} & \text{otherwise}.
\end{array}
\right.
\end{equation}
\end{defin}

In their paper, Chen and Zhang provide also another definition of Kodaira dimension for an almost complex manifold (see \cite[Definition 1.5]{Chen-Zhang}): one uses a basis for the space of pseudoholomorphic sections of the pluricanonical bundle to produce a map
\[\Phi_{\cK_X^{\tensor m}}: X \smallsetminus B \longrightarrow \IP^n,\]
where $B$ is the base locus of $|\cK_X|$, and then define
\[\kappa_J(M) := \left\{ \begin{array}{ll}
-\infty & \text{if } P_m(J) = 0 \text{ for every } m \geq 1,\\
\max_m \dim_\IC \Phi_{\cK_X^{\tensor m}}(X \smallsetminus B) & \text{otherwise}.
\end{array} \right.\]

\begin{rem}
It is an open problem whether these two definitions actually coincide, but there are few cases where this is known. By definition, for an almost complex manifold $(M, J)$ we have $\kappa^J(M) = -\infty$ if and only if $\kappa_J(M) = -\infty$. It requires some moments more of thinking the fact that also $\kappa^J(M) = 0$ if and only if $\kappa_J(M) = 0$. Anyway, it is a well-known fact that $\kappa^J(M) = \kappa_J(M)$ if $J$ is integrable.
\end{rem}

\section{Recaps on the canonical connection on almost complex manifolds}\label{sect: recaps}

In this section we recall some basic facts and definitions concerning canonical connections on almost complex manifolds. The theory is well known, but we decided to include this section for the sake of completeness and to set up the notation we will use throughout the paper.

The interested reader may refer to \cite{Gau} or \cite{Tos-Wein-Yau} for a more detailed exposition.

\subsection{Generalities on connections}

We begin recalling the definition of complex connection.

\begin{defin}[Complex connection]
Let $M$ be a smooth manifold, and let $E$ be a complex vector bundle on $M$. A \emph{(complex) connection} on $E$ is a map
\[\nabla: \Gamma(M, T_\IC M) \times \Gamma(M, E) \longrightarrow \Gamma(M, E)\]
such that:
\begin{enumerate}
\item $\nabla$ is $\IC$-linear in each entry;
\item $\nabla_{fX} s = f \nabla_X s$ for every (complex smooth) function $f$ on $M$;
\item $\nabla_X (fs) = X(f) \cdot s + \nabla_X s$ for every (complex smooth) function $f$ on $M$.
\end{enumerate}
\end{defin}

If we have a real vector bundle $E$ on the manifold $M$, endowed with a (real) connection $D$, then there is a canonical way to extend this connection to a complex connection $D^\IC$ on the complexification $E_\IC$ of $E$:
\[D^\IC_{X + \ii Y} (s + \ii t) = D_X s - D_Y t + \ii (D_X t + D_Y s).\]

Let now consider a complex vector bundle on $M$. We can see our complex vector bundle as a pair $(E, I)$, where $E$ is a real vector bundle on $M$ and $I$ is an endomorphism of $E$ such that $I^2 = -\id_E$ (cf. \cite[Definition 1.1]{deBar-Tian}). For this reason, we will refer to $(E, I)$ as the complex vector bundle, while $E$ will denote the underlying real bundle. Of course, there is a canonical isomorphism of complex vector bundles $(E, I) \simeq E^{1, 0} \subseteq E_\IC$.

Let $D$ be a (real) connection on $E$. We define
\[\begin{array}{rccc}
\nabla^D: & \Gamma(M, T_\IC M) \times \Gamma(M, (E, I)) & \longrightarrow & \Gamma(M, (E, I))\\
 & (X + \ii Y, s) & \longmapsto & D_X s + I D_Y s.
\end{array}\]
The following Lemma is well known.

\begin{lemma}\label{lemma: iso connections}
In the above situation, $\nabla^D$ is a (complex) connection on $(E, I)$ if and only if $DI = 0$. In this case, $\nabla^D$ coincides with the restriction of $D^\IC$ to $E^{1, 0}$ under the canonical isomorphism
\[\begin{array}{rccc}
\xi: & (E, I) & \longrightarrow & E^{1, 0}\\
 & s & \longmapsto & \frac{1}{2} (s - \ii Is).
\end{array}\]
\end{lemma}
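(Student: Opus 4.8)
The plan is to prove both assertions by direct verification, carefully keeping track of the identification of scalar multiplication by $\ii$ on $(E, I)$ with the action of $I$. Throughout I will use the relation $I^2 = -\id_E$ and the fact that the real connection $D$ induces a connection on $\mathrm{End}(E)$, so that the Leibniz rule $D_X(I s) = (D_X I) s + I\, D_X s$ holds for every real vector field $X$ and section $s$.

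For the equivalence in the first statement, I would check the three defining axioms of a complex connection for the map $\nabla^D$, writing a complex vector field as $Z = X + \ii Y$ and a complex function as $f = g + \ii h$ with $X, Y$ real vector fields and $g, h$ real functions. First I would verify $\IC$-linearity in the first entry and the tensoriality $\nabla^D_{fZ} s = f\, \nabla^D_Z s$: expanding $fZ = (gX - hY) + \ii(gY + hX)$ and using only $I^2 = -\id_E$ together with the $\IR$-bilinearity and tensoriality of $D$, both sides collapse to $g\, D_X s - h\, D_Y s + g\, I D_Y s + h\, I D_X s$, so these axioms hold unconditionally. The crux is the Leibniz rule in the second entry: expanding $\nabla^D_Z(fs)$ with $fs = gs + h\, I s$ and comparing with $Z(f)\, s + f\, \nabla^D_Z s$ (remembering that $Z(f)\, s$ and $f\, \nabla^D_Z s$ are scalar multiplications in $(E, I)$, hence involve $I$), all terms match except for a residual contribution $h\bigl[(D_X I) + I(D_Y I)\bigr] s$. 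Setting $Y = 0$ shows this vanishes for all data precisely when $D_X I = 0$ for every $X$, that is $DI = 0$; conversely $DI = 0$ makes the residual term disappear and yields all three axioms. This isolates $DI = 0$ as exactly the obstruction, giving the equivalence.

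For the second statement I would first record that $\xi$ is a $\IC$-linear isomorphism onto $E^{1,0}$, the $\ii$-eigenbundle of $I$ in $E_\IC$: the eigenvalue identity $I(s - \ii I s) = \ii(s - \ii Is)$ and the relation $\xi(I s) = \ii\, \xi(s)$ both follow from $I^2 = -\id_E$. Assuming now $DI = 0$, the complexified endomorphism satisfies $D^\IC I = 0$, so $E^{1,0}$ is $D^\IC$-parallel and the restriction $D^\IC|_{E^{1,0}}$ is well defined. It then remains to verify the intertwining $\xi(\nabla^D_Z s) = D^\IC_Z(\xi s)$ for $Z = X + \ii Y$. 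Writing $\xi(s) = \frac12 s - \frac{\ii}{2} I s$ as a section $u + \ii v$ of $E_\IC$ with $u = \frac12 s$ and $v = -\frac12 Is$, I would apply the defining formula for $D^\IC$ and simplify the resulting $D_Y(I s)$ and $D_X(I s)$ terms using $DI = 0$; both $\xi(\nabla^D_Z s)$ and $D^\IC_Z(\xi s)$ then reduce to $\frac12\bigl[D_X s + I D_Y s - \ii(I D_X s - D_Y s)\bigr]$, proving the claim.

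I expect no deep obstacle: the argument is entirely computational. The one point requiring genuine care is the bookkeeping of real and imaginary parts, in particular consistently interpreting multiplication by $\ii$ on $(E, I)$ as the action of $I$ and correctly expanding $Z(f)$ into its real and imaginary parts, since it is precisely this bookkeeping that surfaces the term proportional to $DI$ and pins down the equivalence.
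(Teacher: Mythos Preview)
Your proposal is correct. The paper does not actually give a proof of this lemma: it is introduced with ``The following Lemma is well known'' and no \texttt{proof} environment follows. Your direct verification therefore supplies precisely the details the paper omits. The computations you outline are accurate; in particular, the residual term $h\bigl[(D_X I) + I(D_Y I)\bigr]s$ in the Leibniz check is exactly what appears, and your intertwining computation $\xi(\nabla^D_Z s) = D^\IC_Z(\xi s)$ is correct once $DI = 0$ is used to simplify $D_X(Is)$ and $D_Y(Is)$. One small remark: the $\IC$-linearity of $\nabla^D$ in the \emph{second} entry (part of axiom (1)) is not quite ``unconditional'' either---it is the constant-$f$ case of your Leibniz computation and produces the same residual term---but since you recover it from Leibniz there is no gap.
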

%

Lemma \ref{lemma: iso connections} essentially states that if $(E, I)$ is a complex vector bundle and $D$ is a connection of $E$ such that $DI = 0$, then we have a commutative diagram
\[\xymatrix{\Gamma(M, T_\IC M) \times \Gamma(M, (E, I)) \ar[r]^(.65){\nabla^D} \ar[d]_{\id \times \xi} & \Gamma(M, (E, I)) \ar[d]^\xi\\
\Gamma(M, T_\IC M) \times \Gamma(M, E^{1, 0}) \ar[r]^(.65){D^\IC} & \Gamma(M, E^{1, 0})}\]
where the vertical maps are isomorphisms. As a consequence, we have canonical bijections between the following sets:
\begin{enumerate}
\item $\set{\text{Real connections } D \text{ on } E \text{ such that } DI = 0}$;
\item $\set{\text{Complex connections on } (E, I)}$;
\item $\set{\text{Complex connections on } E^{1, 0}}$.
\end{enumerate}

\subsection{The type of a form with values in a bundle}

In this section we want to discuss some classical stuff on the type decomposition on almost complex manifolds. We restrict ourselves to the case of $2$-forms as this is the only case we will consider in the sequel.

Let $(E, I)$ be a complex vector bundle on the almost complex manifold $(M, J)$. From the real point of view, a $2$-form on $M$ with values in $E$ is a section
\[\omega \in \Gamma\pa{M, {\bigwedge}^2 T^* M \tensor_\IR E}.\]
When we extend this form by $\IC$-linearity, we get then a section
\[\hat{\omega} \in \Gamma\pa{M, {\bigwedge}^2 T^*_\IC M \tensor_\IC (E, I)}.\]
It makes then sense to decompose
\[\hat{\omega} = \omega^{2, 0} + \omega^{1, 1} + \omega^{0, 2}\]
according to the type decomposition \emph{of the form part} of $\hat{\omega}$. The relation between $\omega^{p, q}$ and $\omega$ is outlined in the following Lemma.

\begin{lemma}
Keep the notations as above. Then:
\begin{enumerate}
\item\label{item: type 20} the form $\hat{\omega}$ is of pure type $(2, 0)$ if and only if $\omega(JX, Y) = I \omega(X, Y)$;
\item the form $\hat{\omega}$ is of pure type $(1, 1)$ if and only if $\omega(JX, JY) = \omega(X, Y)$;
\item the form $\hat{\omega}$ is of pure type $(0, 2)$ if and only if $\omega(JX, Y) = -I \omega(X, Y)$.
\end{enumerate}
\end{lemma}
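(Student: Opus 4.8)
The plan is to reduce the whole statement to a direct computation carried out in the eigenspace decomposition $T_\IC M = T^{1,0}M \oplus T^{0,1}M$, where $T^{1,0}M$ and $T^{0,1}M$ are the $+\ii$- and $-\ii$-eigenbundles of the $\IC$-linear extension of $J$. The observation that drives everything is a \emph{transfer rule}: since the values of $\hat\omega$ lie in the complex bundle $(E, I)$, the $\IC$-bilinear extension satisfies $\hat\omega(\ii Z, W) = I\,\hat\omega(Z, W)$ and $\hat\omega(Z, \ii W) = I\,\hat\omega(Z, W)$. In other words, the scalar $\ii$ applied to a tangent argument is converted into the endomorphism $I$ applied to the value. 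I would verify this once (writing $Z = X_1 + \ii X_2$, $W = Y_1 + \ii Y_2$ and comparing the $\IC$-bilinear extension with $I$ acting on the target), since it is exactly the mechanism that makes the complex unit turn into the operator $I$ appearing in the statement.

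Next I would write, for real vector fields $X, Y$, the decomposition $X = X^{1,0} + X^{0,1}$ with $X^{1,0} = \frac{1}{2}(X - \ii JX) \in T^{1,0}M$ and $X^{0,1} = \frac{1}{2}(X + \ii JX) \in T^{0,1}M$, and likewise for $Y$. Setting $A = \hat\omega(X^{1,0}, Y^{1,0})$, $B = \hat\omega(X^{1,0}, Y^{0,1})$, $C = \hat\omega(X^{0,1}, Y^{1,0})$ and $D = \hat\omega(X^{0,1}, Y^{0,1})$, these four quantities are precisely the evaluations of the $(2,0)$-, $(1,1)$- and $(0,2)$-parts of $\hat\omega$; thus $\hat\omega$ is of pure type $(2,0)$ (resp.\ $(1,1)$, resp.\ $(0,2)$) exactly when $B=C=D=0$ (resp.\ $A=D=0$, resp.\ $A=B=C=0$). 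Using $JX = \ii X^{1,0} - \ii X^{0,1}$ together with the transfer rule, a short expansion then produces the three identities $\omega(X, Y) = A+B+C+D$, $\omega(JX, Y) = I(A + B - C - D)$ and $\omega(JX, JY) = -A + B + C - D$.

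From these the forward implications are immediate: if $\hat\omega$ is of type $(2,0)$ then $\omega(JX,Y) = IA = I\omega(X,Y)$, and symmetrically for $(0,2)$, while type $(1,1)$ gives $\omega(JX,JY) = B+C = \omega(X,Y)$. The converse implications are where I expect the only real difficulty to lie. For the $(2,0)$ case, forming $I\omega(X,Y) - \omega(JX,Y)$ yields $I(C+D)=0$, hence $\hat\omega(X^{0,1}, Y) = 0$ for all real $X, Y$; since $X \mapsto X^{0,1}$ surjects onto $T^{0,1}M$ and $\hat\omega$ is alternating, this forces $\hat\omega$ to vanish whenever either entry is of type $(0,1)$, i.e.\ purity of type $(2,0)$. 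The case $(0,2)$ is entirely symmetric.

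The genuinely delicate point is the $(1,1)$ converse: the hypothesis $\omega(JX,JY)=\omega(X,Y)$ only delivers $A + D = 0$, which a priori does not force $A$ and $D$ to vanish individually. To close this gap I would record the auxiliary identities $A(JX, Y) = I\,A(X,Y)$ and $D(JX, Y) = -I\,D(X,Y)$, both immediate from $(JX)^{1,0} = \ii X^{1,0}$ and $(JX)^{0,1} = -\ii X^{0,1}$; substituting $A = -D$ into them gives $A = D$, whence $A = D = 0$ and $\hat\omega$ is of pure type $(1,1)$. Throughout, the one bookkeeping point to keep scrupulously straight is the distinction between the imaginary unit $\ii$ complexifying $T^*M$ and the complex structure $I$ on the value bundle; once the transfer rule is in place the remaining computation is routine, so I would present it compactly rather than verifying all eight bilinear terms by hand.
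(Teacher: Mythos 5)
Your proof is correct, and its engine is the same as the paper's: decompose the real arguments into $(1,0)$- and $(0,1)$-parts and use the fact that the imaginary unit applied to a tangent argument transfers to the endomorphism $I$ applied to the value. The difference is organizational, but it buys something real. The paper proves only item (1), by expanding $\hat{\omega}(X^{1,0},Y^{1,0})$ and verifying it equals $\hat{\omega}(X,Y)$ under the hypothesis, and then asserts that the remaining items are ``very similar.'' Your bookkeeping with the four components $A,B,C,D$ derives all three items from three master identities, and --- more importantly --- it exposes and resolves the one place where ``very similar'' is not quite automatic: the $(1,1)$ converse. There the hypothesis only yields the identity $A+D=0$ in $(X,Y)$, and one needs the extra substitution $X \mapsto JX$ (giving $A(JX,Y)=I\,A(X,Y)$ and $D(JX,Y)=-I\,D(X,Y)$) to split it into $A=D=0$. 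The paper's expansion template would also succeed there, but only because the vanishing of the $\IC$-bilinear form $\hat{\omega}-\hat{\omega}^{1,1}$ on real pairs propagates to all complexified arguments; your argument makes that step explicit rather than implicit. One small point to state rather than leave tacit in the $(2,0)$ and $(0,2)$ converses: every section of $T^{0,1}M$ arises as $X^{0,1}$ for a real field $X$ (take $X=Z+\bar{Z}$, which gives $X^{0,1}=Z$), so the identity $\hat{\omega}(X^{0,1},Y)=0$ for real $X,Y$, together with bilinearity and antisymmetry, really does annihilate every term with a $(0,1)$ slot.
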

\begin{proof}
As the proof of each point is very similar (and these points should also be familiar), we give a proof only of \eqref{item: type 20}.

Let $X, Y \in \Gamma(M, T_\IC M)$, and denote $X^{1, 0}$ (resp., $X^{0, 1}$) the $(1, 0)$-part (resp., the $(0, 1)$-part) of $X$, and similarly for $Y$. Then $\hat{\omega}$ is of pure type $(2, 0)$ if and only if
\[\hat{\omega}(X, Y) = \hat{\omega}(X^{1, 0}, Y^{1, 0}).\]
Assume this holds, and let $X, Y \in \Gamma(M, T M)$. Then $\omega(X, Y) = \hat{\omega}(X, Y)$, and so
\[\begin{array}{rl}
\omega(X, Y) = & \hat{\omega}(X^{1, 0}, Y^{1, 0}) =\\
= & \frac{1}{4}(\hat{\omega}(X - \ii JX, Y - \ii JY)) =\\
= & \frac{1}{4}(\omega(X, Y) - \omega(JX, JY) - I(\omega(X, JY) + \omega(X, JY))).
\end{array}\]
A similar computation shows that
\[\omega(JX, Y) = \frac{1}{4}(\omega(X, JY) + \omega(X, JY) + I(\omega(X, Y) - \omega(JX, JY))),\]
hence that $\omega(JX, Y) = I \omega(X, Y)$.

Vice versa, observe that $\omega(JX, Y) = I \omega(X, Y)$ implies that also $\omega(X, JY) = I \omega(X, Y)$. It then follows that
\[\begin{array}{rl}
\hat{\omega}(X^{1, 0}, Y^{1, 0}) = & \frac{1}{4}(\hat{\omega}(X - \ii JX, Y - \ii JY)) =\\
= & \frac{1}{4}(\hat{\omega}(X, Y) - \hat{\omega}(JX, JY) +\\ 
 & -I(\hat{\omega}(X, JY) + \hat{\omega}(X, JY))) =\\
= & \frac{1}{4}(\hat{\omega}(X, Y) + \hat{\omega}(X, Y) + \hat{\omega}(X, Y) + \hat{\omega}(X, Y))) =\\
= & \hat{\omega}(X, Y).
\end{array}\]
\end{proof}

This Lemma justifies the definition of type of a form with values in a complex bundle given in \cite[Definition 1]{Gau}. Here we provide the complex interpretation, comparing $\hat{\omega}$ with the `usual' complex extension
\[\omega_\IC \in \Gamma\pa{M, {\bigwedge}^2 T^*\IC M \tensor_\IC E_\IC}\]
of $\omega$. It is in fact easy to see that there is a commutative diagram
\[\xymatrix{{\bigwedge}^2 T_\IC M \ar[r]^{\hat{\omega}} \ar[d]_{\omega_\IC} & (E, I) \ar[d]^\xi\\
T_\IC M \ar[r]^{\pi^{1, 0}} & E^{1, 0},}\]
where $\xi$ denote the standard complex isomorphism $(E, I) \simeq E^{1, 0}$ as before.

\subsection{Connections on the tangent bundle}

We now want to restrict to the case where $(M, J)$ is an almost complex manifold. Let $\nabla$ be a complex connection on $T^{1, 0} M$: our aim is to give a `good' definition for the torsion of $\nabla$.

Let $D$ be the real connection on $TM$ associated to $\nabla$, which is explicitly given by $D_X Y = \xi^{-1}(\nabla_X \xi(Y))$ and satisfies $DI = 0$. The \emph{holomorphic torsion} of $\nabla$ is then defined as $T^\nabla = \hat{T^D}$, i.e.
\[\begin{array}{rccl}
T^\nabla: & \Gamma\pa{M, {\bigwedge}^2 T_\IC M} & \longrightarrow & \Gamma(M, T^{1, 0} M)\\
 & (X, Y) & \longmapsto & \pi^{1, 0}(D^\IC_X Y - D^\IC_Y X - [X, Y]).
\end{array}\]

\subsection{The case of almost Hermitian manifolds}\label{sect: almost hermitian}

Let $(M, g, J)$ be an almost Hermitian manifold, i.e., $(M, J)$ is an almost complex manifold and $g$ is a Riemannian metric on $M$ such that $g(J \blank, J \blank) = g(\blank, \blank)$. Let $\omega(\blank, \blank) = g(J \blank, \blank)$ be the associated fundamental $2$-form. Then
\[h = g - \ii \omega\]
defines a Hermitian scalar product on $(TM, J)$. Moreover, if we denote by $g_\IC$ the complex bilinear extension of $g$ to $T_\IC M$, then for all $X, Y \in \Gamma(M, TM)$
\[h(X, Y) = 2 g_\IC(\xi(X), \overline{\xi(Y)}),\]
i.e., $\frac{1}{2} h$ coincides with the complex Hermitian extension of $g$ via the canonical identification $(TM, J) \simeq T^{1, 0} M$ provided by $\xi$.

Let now $D$ be a real connection on $TM$, and assume that
\[D g = 0, \qquad DJ = 0.\]
An easy computation then shows that $D \omega = 0$, from which we deduce that $\nabla^D h = 0$.

\begin{rem}\label{rem: g I parallel}
We show now that there exists at least one such connection. Let $D$ be a connection such that $D g = 0$, e.g., the Levi-Civita connection of $g$. Let $D'$ be another connection such that $D' g = 0$: we have $D'_X Y = D_X Y + F_X Y$, and the condition on the metric is equivalent to
\[g(F_X Y, Z) + g(Y, F_X Z) = 0.\]
We want to find a suitable $F$ such that $D' J = 0$. For this purpose, we see that $D' J = 0$ is equivalent to
\[(DJ)_X Y = J F_X Y - F_X JY.\]
So, if we choose
\[F_X Y = -\frac{1}{2} D_X Y - \frac{1}{2} J D_X JY\]
the resulting connection
\[D'_X Y = \frac{1}{2} (D_X Y - J D_X JY)\]
is such that $D' g = 0$ and $D' J = 0$.
\end{rem}

Let $\LC$ denote the Levi-Civita connection of $g$, and consider the connection
\begin{equation}\label{eq: ind conn}
D_X Y = \frac{1}{2} \pa{\LC_X Y - J \LC_X JY}, \qquad X, Y \in \Gamma\pa{M, TM}
\end{equation}
on $TM$. It follows from the discussion in Remark \ref{rem: g I parallel} that $Dg = 0$ and $DJ = 0$, and as a consequence we have the induced (isomorphic) complex connections $\nabla^D$ and $D^\IC$ on $(TM, I)$ and $T^{1, 0} M$ respectively.

We want to compute the holomorphic torsion of these connections, so we begin with some remarks on the torsion of $D$.

\begin{defin}
Let $J$ be an almost complex structure on the differentiable manifold $M$. The \emph{Nijenhuis tensor} of $J$ is
\[N_J(X, Y) = [JX, JY] - J[JX, Y] - J[X, JY] - [X, Y], \qquad X, Y \in \Gamma(M, TM).\]
So $N_J \in \Gamma\pa{M, {\bigwedge}^2 T^*M \tensor TM}$.
\end{defin}

\begin{lemma}
Let $(M, g, J)$ be an almost Hermitian manifold. Denote by $\LC$ the Levi-Civita connection of $g$ and by $D$ the induced connection as in \eqref{eq: ind conn}. Then
\[2 T^D(X, Y) = N_J(X, Y) - (\LC J)_{JX}Y + (\LC J)_{JY}X,\]
where $N_J$ is the Nijenhuis tensor of $J$.
\end{lemma}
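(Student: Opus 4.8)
The plan is to compute everything directly from the definitions. We have the formula
\[
D_X Y = \tfrac{1}{2}\bigl(\LC_X Y - J \LC_X JY\bigr),
\]
so the torsion of $D$ is
\[
T^D(X, Y) = D_X Y - D_Y X - [X, Y].
\]
The first step is to substitute the expression for $D$ into this formula. Writing $\LC_X Y - \LC_Y X = [X, Y]$ (the Levi-Civita connection is torsion-free), the terms involving $\LC_X Y$ and $\LC_Y X$ combine with the bracket $[X, Y]$ to produce a single $-\tfrac{1}{2}[X, Y]$ contribution, leaving us with
\[
2 T^D(X, Y) = -J \LC_X JY + J \LC_Y JX - [X, Y].
\]
So the whole problem reduces to rewriting $-J\LC_X JY + J\LC_Y JX - [X,Y]$ in terms of the Nijenhuis tensor and the covariant derivative of $J$.

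The second step is to introduce $(\LC J)$ systematically. By definition $(\LC J)_X Y = \LC_X(JY) - J\LC_X Y$, so $\LC_X(JY) = (\LC J)_X Y + J \LC_X Y$. I would substitute this into the two terms $-J\LC_X JY$ and $J\LC_Y JX$. Using $J^2 = -\id$, the term $-J\LC_X(JY)$ becomes $-J(\LC J)_X Y + \LC_X Y$, and symmetrically for the other. The resulting $\LC_X Y - \LC_Y X$ cancels against $-[X,Y]$ by torsion-freeness, leaving an expression purely in terms of $J(\LC J)_X Y$ and $J(\LC J)_Y X$. Then I would use the standard identity relating $(\LC J)$ to the Nijenhuis tensor: expanding $N_J(X, Y)$ via the Levi-Civita connection (replacing each bracket $[U, V]$ with $\LC_U V - \LC_V U$) expresses $N_J$ as a combination of $(\LC J)$ terms evaluated at $X, Y, JX, JY$. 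The target formula on the right-hand side, namely $N_J(X, Y) - (\LC J)_{JX}Y + (\LC J)_{JY}X$, is then matched against what I obtained by algebraic manipulation.

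The key technical identity I expect to rely on is that for any Hermitian (or merely almost Hermitian) structure, the covariant derivatives of $J$ satisfy symmetry relations such as $J(\LC J)_X Y = -(\LC J)_X(JY)$ and $(\LC J)_{JX}Y + J(\LC J)_X Y$ type relations, which follow from differentiating $J^2 = -\id$ (giving $(\LC J)_X J + J(\LC J)_X = 0$). These let me trade a factor of $J$ applied from the outside for a sign and a repositioning of the $J$ inside the arguments. The main obstacle, then, is purely bookkeeping: keeping track of the many sign changes and the shuffling of $J$ between the subscript and the argument of $(\LC J)$, and correctly expanding $N_J$ in terms of $\LC$ so that all bracket terms cancel as expected.

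My recommendation is to organize the computation around the expansion of $N_J$ first. Expanding $N_J(X, Y)$ using $[U,V] = \LC_U V - \LC_V U$ yields
\[
N_J(X, Y) = (\LC J)_{JX} Y - J(\LC J)_X Y - (\LC J)_{JY} X + J(\LC J)_Y X,
\]
after applying $J$ appropriately and using $J^2 = -\id$. Comparing this with the right-hand side $N_J(X, Y) - (\LC J)_{JX}Y + (\LC J)_{JY}X$, the terms $(\LC J)_{JX} Y$ and $-(\LC J)_{JY}X$ from the expansion of $N_J$ cancel against the extra correction terms, so the right-hand side collapses to $-J(\LC J)_X Y + J(\LC J)_Y X$. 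This is exactly what I obtained independently for $2 T^D(X, Y)$ in the second step, which closes the proof. Thus the heart of the argument is recognizing that both sides reduce to the same expression $-J(\LC J)_X Y + J(\LC J)_Y X$.
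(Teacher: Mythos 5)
Your proof is correct and takes essentially the same route as the paper's: both arguments use torsion-freeness of $\LC$ to reduce $2T^D(X,Y)$ to $-J\bigl((\LC J)_X Y - (\LC J)_Y X\bigr)$, and both expand $N_J$ via the Levi-Civita connection to show that $N_J(X,Y) - (\LC J)_{JX}Y + (\LC J)_{JY}X$ collapses to that same expression. (The symmetry relations you mention, such as $(\LC J)_X J + J(\LC J)_X = 0$, turn out to be unnecessary in your own final organization of the computation, just as in the paper.)
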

\begin{proof}
This is just a computation. On one hand we have
\begin{equation}\label{eq: computation 2T}
\begin{array}{rl}
2 T^D(X, Y) = & \LC_X Y - J \LC_X JY - \LC_Y X + J \LC_Y JX - 2[X, Y] =\\
= & - J \LC_X JY + J \LC_Y JX - \LC_X Y + \LC_Y X =\\
= & J(-\LC_X JY + J \LC_X Y + \LC_Y JX - J \LC_Y X) =\\
= & -J((\LC J)_X Y - (\LC J)_Y X);
\end{array}
\end{equation}
on the other
\begin{equation}\label{eq: computation N_I}
\begin{array}{rl}
N_J(X, Y) = & \LC_{JX} JY - \LC_{JY} JX - J(\LC_{JX} Y - \LC_Y JX) +\\
 & -J(\LC_X JY - \LC_{JY} X) - \LC_X Y + \LC_Y X =\\
= & (\LC J)_{JX} Y - (\LC J)_{JY} X + J(-(\LC J)_X Y + (\LC J)_Y X),
\end{array}
\end{equation}
and the Lemma follows.
\end{proof}

\begin{defin}
Let $(M, g, J)$ be an almost Hermitian manifold, with associated fundamental form $\omega$. Then $(M, g, J)$ is said
\begin{enumerate}
\item \emph{almost K\"ahler} if $d\omega = 0$;
\item \emph{quasi K\"ahler} if $\delbar \omega = 0$.
\end{enumerate}
In particular, any almost K\"ahler manifold is quasi K\"ahler.
\end{defin}

\begin{cor}\label{cor: T = 1/4 N}
Let $(M, g, J)$ be a quasi K\"ahler manifold, and let $\LC$ denote the Levi-Civita connection of $g$. Then $N_J(X, Y) = -2J((\LC J)_X Y - (\LC J)_Y X)$, and so
\[T^D = \frac{1}{4} N_J(X, Y),\]
where $D$ is the connection defined by \eqref{eq: ind conn}.
\end{cor}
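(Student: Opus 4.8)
The plan is to prove Corollary \ref{cor: T = 1/4 N} by combining the preceding Lemma with the quasi-K\"ahler hypothesis $\delbar\omega = 0$. The Lemma gives the identity $2T^D(X,Y) = N_J(X,Y) - (\LC J)_{JX}Y + (\LC J)_{JY}X$, while equation \eqref{eq: computation 2T} simultaneously gives $2T^D(X,Y) = -J\big((\LC J)_X Y - (\LC J)_Y X\big)$. So the corollary will follow once I show that the quasi-K\"ahler condition forces $N_J(X,Y) = -2J\big((\LC J)_X Y - (\LC J)_Y X\big)$, for then substituting into the Lemma's formula yields $2T^D = N_J - 2T^D$ in the form $N_J = -4T^D\cdot\tfrac{1}{2}$, hence $T^D = \tfrac{1}{4}N_J$.

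First I would translate $\delbar\omega = 0$ into a statement about $\LC J$. The fundamental form satisfies $\omega(\blank,\blank) = g(J\blank,\blank)$, and since $\LC g = 0$, the covariant derivative $\LC\omega$ is controlled entirely by $\LC J$; explicitly $(\LC_X\omega)(Y,Z) = g((\LC_X J)Y, Z)$. The $(0,2)$-part of $d\omega$ is $\delbar\omega$, and using the standard formula $d\omega(X,Y,Z) = \sum_{\text{cyc}} (\LC_X\omega)(Y,Z)$ (valid because $\LC$ is torsion-free), the vanishing of the $(0,2)$-component can be rewritten as a symmetry/type condition on $\LC J$. Concretely I expect the quasi-K\"ahler condition to be equivalent to $(\LC_{JX} J)Y = -J(\LC_X J)Y$, i.e.\ to the statement that $X \mapsto (\LC_X J)$ anti-commutes with $J$ in the appropriate slot.

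Next I would feed this relation into equation \eqref{eq: computation N_I} for the Nijenhuis tensor. That equation reads $N_J(X,Y) = (\LC J)_{JX}Y - (\LC J)_{JY}X + J\big(-(\LC J)_X Y + (\LC J)_Y X\big)$. Applying $(\LC J)_{JX}Y = -J(\LC J)_X Y$ (and the analogous identity with $X,Y$ swapped) collapses the first two terms into the last group, giving $N_J(X,Y) = -J(\LC J)_X Y + J(\LC J)_Y X + J\big(-(\LC J)_X Y + (\LC J)_Y X\big) = -2J\big((\LC J)_X Y - (\LC J)_Y X\big)$, which is exactly the claimed expression. Comparing with \eqref{eq: computation 2T} then gives $N_J = 2\cdot(2T^D) = 4T^D$ immediately.

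The main obstacle will be the first step: correctly deriving the pointwise consequence of $\delbar\omega = 0$ for $\LC J$, and in particular getting the precise type condition with the right signs. This requires care in relating $\delbar\omega$ (the $(0,2)$-part of $d\omega$) to the real tensor $\LC J$ via the type decomposition from the earlier subsection, and in matching conventions for how $J$ acts. Everything after that is a mechanical substitution into \eqref{eq: computation N_I}, so once the translation of the quasi-K\"ahler hypothesis is pinned down the rest is routine algebra. I would therefore concentrate the written proof on justifying the key identity $(\LC J)_{JX} = -J(\LC J)_X$ (as endomorphisms), and only then carry out the short calculation in \eqref{eq: computation N_I}.
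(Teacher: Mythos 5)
Your proposal is correct and follows essentially the same route as the paper: both reduce the corollary to the identity $(\LC J)_{JX}Y = -J(\LC J)_X Y$, valid on quasi-K\"ahler manifolds, substitute it into \eqref{eq: computation N_I} to obtain $N_J(X,Y) = -2J((\LC J)_X Y - (\LC J)_Y X)$, and then compare with \eqref{eq: computation 2T}. The one step you single out as the main obstacle --- deriving that identity from $\delbar\omega = 0$ --- is exactly what the paper dispatches by citing \cite[Proposition 1(iv)]{Gau}; if you do write it out yourself, note that $\delbar\omega$ is the $(1,2)$-part (not the $(0,2)$-part) of $d\omega$, and that passing from the cyclic-sum formula $d\omega(X,Y,Z)=\sum_{\mathrm{cyc}}(\LC_X\omega)(Y,Z)$ to the pointwise condition on $\LC J$ requires a polarization argument exploiting the symmetries of $\LC\omega$, not just a direct translation.
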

\begin{proof}
It follows from \cite[Proposition 1(iv)]{Gau} that $(M, g, J)$ is quasi K\"ahler if and only if $(\LC J)_{JX} Y = -J (\LC J)_X Y$. But then the equation \eqref{eq: computation N_I} semplifies to $N_J(X, Y) = -2J((\LC J)_X Y - (\LC J)_Y X)$. The result then follows from equation \eqref{eq: computation 2T}.
\end{proof}

Under the assumptions of Corollary \ref{cor: T = 1/4 N}, we can see that $T^D$ is of pure type $(0, 2)$: this follows from the fact that the Nijenhuis tensor satisfies $N_J(JX, Y) = -J N_J(X, Y)$. We give now the complex version of the previous result.

\begin{prop}\label{prop: T = N}
Let $(M, g, J)$ be a quasi K\"ahler manifold. Denote by $\LC$ the Levi-Civita connection of $g$ and by $D$ the connection on $TM$ induced by \eqref{eq: ind conn}. Let $\nabla$ be the complex connection on $T^{1, 0} M$ induced by $D$. Then the holomorphic torsion of $\nabla$ is
\[T^\nabla(X, Y) = \frac{1}{4} \pi^{1, 0} N_I^\IC(X, Y), \qquad X, Y \in \Gamma(M, T_\IC M).\]
\end{prop}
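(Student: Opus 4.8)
The plan is to reduce everything to the real computation already performed in Corollary \ref{cor: T = 1/4 N}, exploiting that the holomorphic torsion is, by its very definition, the $\pi^{1,0}$-projection of the complex bilinear extension of the real torsion $T^D$.

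First I would check that the assignment
\[(X, Y) \longmapsto D^\IC_X Y - D^\IC_Y X - [X, Y], \qquad X, Y \in \Gamma(M, T_\IC M),\]
is tensorial. Using $\IC$-linearity of $D^\IC$ in the first slot, the Leibniz rule in the second slot, and the identity $[fX, Y] = f[X, Y] - Y(f) X$, one sees that replacing $X$ by $fX$ rescales the expression by $f$ while the $Y(f)$-terms cancel; the same holds in the second argument. Hence this is a well-defined tensor $T_\IC M \times T_\IC M \to T_\IC M$, $\IC$-bilinear over $C^\infty(M, \IC)$. Since $D^\IC$ restricts to $D$ on real vector fields and the bracket extends $\IC$-bilinearly, this tensor agrees with $T^D$ on $\Gamma(M, TM) \times \Gamma(M, TM)$; by uniqueness of the $\IC$-bilinear extension it therefore coincides with the complexification $T^D_\IC$ of $T^D$. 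Unwinding the definition $T^\nabla = \widehat{T^D}$ together with the commutative diagram relating $\widehat{\blankSpace}$ to the ordinary complex extension $\blankSpace_\IC$, this gives
\[T^\nabla(X, Y) = \pi^{1,0}\pa{T^D_\IC(X, Y)}, \qquad X, Y \in \Gamma(M, T_\IC M).\]

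Next I would invoke Corollary \ref{cor: T = 1/4 N}: since $(M, g, J)$ is quasi K\"ahler, $T^D(X, Y) = \tfrac{1}{4} N_J(X, Y)$ for all real vector fields $X, Y$. Both sides are tensors on $TM$, so passing to their $\IC$-bilinear extensions preserves the identity, yielding $T^D_\IC(X, Y) = \tfrac{1}{4} N_I^\IC(X, Y)$ for all $X, Y \in \Gamma(M, T_\IC M)$, where $N_I^\IC$ denotes the complex bilinear extension of the Nijenhuis tensor. Substituting this into the formula for $T^\nabla$ and using $\IC$-linearity of $\pi^{1,0}$ produces exactly
\[T^\nabla(X, Y) = \tfrac{1}{4}\, \pi^{1,0} N_I^\IC(X, Y).\]

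The only genuinely delicate point is the first step: one must be sure that the complex expression defining $T^\nabla$ is honestly the $\pi^{1,0}$-projection of $T^D_\IC$, and not some unrelated complex object. This is precisely where the formalism of the type section ($\widehat{\omega}$ versus $\omega_\IC$) and the tensoriality check carry the weight; everything afterwards is the purely formal act of extending a real tensor identity by $\IC$-bilinearity, which commutes both with the projection $\pi^{1,0}$ and with the scalar factor $\tfrac{1}{4}$.
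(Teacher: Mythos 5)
Your proof is correct and follows exactly the route the paper intends: the paper states Proposition \ref{prop: T = N} without any separate proof, presenting it as the ``complex version'' of Corollary \ref{cor: T = 1/4 N}, the implicit argument being precisely that the holomorphic torsion is by definition $\pi^{1, 0}$ applied to the complexified real torsion, which is then evaluated via that corollary. Your tensoriality check and the identification of the defining expression $D^\IC_X Y - D^\IC_Y X - [X, Y]$ with the $\IC$-bilinear extension of $T^D$ simply make explicit the routine complexification step that the paper leaves to the reader.
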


\begin{rem}\label{rem: type 02}
We can simplify the expression for $T^\nabla$ further. It is in fact easy to see that for $X, Y \in \Gamma(M, T_\IC M)$ one has
\[N_J^\IC(X, Y) = -4 \pi^{1, 0}[\pi^{0, 1}X, \pi^{0, 1}Y] - 4 \pi^{0, 1}[\pi^{1, 0}X, \pi^{1, 0}Y],\]
and as a consequence
\[T^\nabla(X, Y) = \frac{1}{4} \pi^{1, 0} N_J^\IC(X, Y) = -\pi^{1, 0}[\pi^{0, 1}X, \pi^{0, 1}Y].\]
We can also observe that now it is evident that $T^\nabla$ is a $(0, 2)$-form with values in $T^{1, 0} M$.
\end{rem}

The connection $\nabla$ we defined is the connection appearing in \cite[$\S$2.6]{Gau} corresponding to the parameter $t = 0$. It is uniquely determined by the following conditions:
\begin{enumerate}
\item $\nabla h = 0$;
\item $T^\nabla$ has vanishing $(2, 0)$-part and its $(1, 1)$-part is anti-symmetric.
\end{enumerate}
On the contrary, the \emph{canonical connection} (which is the Chern connection if $I$ is integrable) corresponds to the choice $t = 1$ of the parameter in Gauduchon's paper, and it is characterized by the vanishing of the $(1, 1)$-part of its holomorphic torsion. What Proposition \ref{prop: T = N} and Remark \ref{rem: type 02} show is that, in the case of almost K\"ahler manifolds, these two connections actually coincide.

\begin{notation}
Let $(M, g, J)$ be a quasi K\"ahler manifold, and let $\LC$ be the Levi-Civita connection of $g$. We will denote by $\ccon$ the induced \emph{canonical connection} on $T^{1, 0} M$, i.e., the complex connection
\[\ccon_X Y = \frac{1}{2} \pa{\LC_X Y - J \LC_X JY}, \qquad X \in \Gamma\pa{M, T_\IC M}, Y \in \Gamma\pa{M, T^{1, 0} M}.\]
\end{notation}

\subsection{The complex formalism}

As we are dealing with almost complex manifolds, it is more convenient to work within the \emph{complex} framework, rather than stay with the real formalism.

Let $X = (M, g, J)$ be an almost Hermitian manifold, and let $h$ be the Hermitian scalar product induced by $g$ on $T^{1, 0} M$, namely $h(Z, W) = g_\IC(Z, \bar{W})$ where $Z, W \in \Gamma(M, T^{1, 0} M)$ and $g_\IC$ is the complex bilinear extension of $g$. Fix a (local) $h$-unitary frame $\set{e_1, \ldots, e_n}$ for $T^{1, 0} M$ with dual frame $\set{e^1, \ldots, e^n}$.

Let $\nabla$ be a connection on $T M$ such that $\nabla g = \nabla J = 0$, and denote also by $\nabla$ its extension to $T_\IC M$. The \emph{connection $1$-forms} of $\nabla$ are then the $1$-forms defined by
\[\nabla e_j = \sum_{i = 1}^n \theta^i_j e_i,\]
and they satisfy $\theta^j_i + \bar{\theta}^i_j = 0$. Let $\tau$ be the holomorphic torsion of $\nabla$, then we have $\tau \in \Gamma\pa{M, {\bigwedge}^2 T^*_\IC M \tensor_\IC T^{1, 0} M}$ and so we can write
\[\tau = \sum_{i = 1}^n \Theta^i \tensor e_i.\]
The $2$-forms $\Theta^i$ appearing in this expression are called the \emph{torsion forms} of $\nabla$, and they are related to the connection form by the \emph{first structure equation}
\begin{equation}\label{eq: first structure equations}
\Theta^i = d e^i + \sum_{j = 1}^n \theta^i_j \wedge e^j, \qquad i = 1, \ldots, n.
\end{equation}
Concerning the curvature, we can also decompose the holomorphic curvature of $\nabla$ as follows:
\[R(X, Y)e_j = \sum_{i = 1}^n \Psi^i_j(X, Y) e_i\]
for suitable $2$-forms $\Psi^i_j$, known the as \emph{curvature forms} of $\nabla$. The \emph{second structure equations}
\begin{equation}\label{eq: second structure equations}
\Psi^i_j = d\theta^i_j + \sum_{k = 1}^n \theta^i_k \wedge \theta^k_j, \qquad i, j = 1, \ldots, n
\end{equation}
provide a direct link between the connection forms and the curvature forms.

We focus now on the case where $\nabla$ is the canonical connection $\ccon$ of $X$. Each curvature form $\Psi^i_j$ can be decomposed according to types into its $(2, 0)$, $(1, 1)$ and $(0, 2)$ parts, and we can then define functions $R^i_{jk\bar{l}}$ by the relation
\[(\Psi^i_j)^{1, 1} = \sum_{k, l = 1}^n R^i_{jk\bar{l}} e^k \wedge \bar{e}^l.\]

\begin{defin}[Ricci and scalar curvature]
The \emph{Ricci curvature} of the canonical connection $\ccon$ of an almost Hermitian manifold $(M, g, J)$ is the tensor
\[\Ric(\ccon) = \sum_{k, l = 1}^n R_{k\bar{l}} e^k \wedge \bar{e}^l, \qquad \text{with } R_{k\bar{l}} = \sum_{i = 1}^n R^i_{ik\bar{l}}.\]
The \emph{scalar curvature} of $\ccon$ is the function
\[\scal(\ccon) = \sum_{k = 1}^n R_{k\bar{k}} = \sum_{i, k = 1}^n R^i_{ik\bar{k}}.\]
\end{defin}

\section{The Kodaira--Thurston manifold}\label{sect: kodaira manifold}

Let us consider the differentiable manifold $M = S^1 \times G$, where $S^1$ is a circle and $G$ is the (left) quotient of the Heisenberg group
\[\set{\left( \begin{array}{ccc}
1 & x & z\\
0 & 1 & y\\
0 & 0 & 1
\end{array} \right) \st x, y, z \in \IR}\]
by its subgroup consisting of matrices with integral entries. Call $t$ a coordinate on $S^1$, then $M$ admits the following global fields of tangent vectors
\[e_1 = \frac{\partial}{\partial t}, \qquad e_2 = \frac{\partial}{\partial x}, \qquad e_3 = \frac{\partial}{\partial y} + x \frac{\partial}{\partial z}, \qquad e_4 = \frac{\partial}{\partial z},\]
whose duals are
\[e^1 = dt, \qquad e^2 = dx, \qquad e^3 = dy, \qquad e^4 = dz - x dy.\]
We recall that the only non-trivial differential of the $e^i$'s is $de^4 = - e^2 \wedge e^3$, as the only non-trivial commutator among the global vector fields given above is easily seen to be $[e_2, e_3] = e_4$.

Once we equip $M$ with the complex structure $J$ defined by
\[J e_1 = e_4, \qquad J e_2 = e_3, \qquad J e_3 = -e_2, \qquad J e_4 = -e_1\]
we obtain a complex manifold, which is known as a \emph{Kodaira surface}. It is well known that $\kappa^J(M) = 0$.

In these notes we want to focus on a different (non-integrable) almost complex structure on the same manifold, which was introduced in \cite[$\S$6.1]{Chen-Zhang}. For any $a \in \IR \smallsetminus \set{0}$, the almost complex structure $J_a$ is defined by
\[J_a e_1 = e_2, \qquad J_a e_2 = -e_1, \qquad J_a e_3 = \frac{1}{a} e_4, \qquad J_a e_4 = -a e_3,\]
and it induces the almost complex structure
\[J_a^* e^1 = -e^2, \qquad J_a^* e^2 = e^1, \qquad J_a^* e^3 = -a e^4, \qquad J_a^* e^4 = \frac{1}{a} e_3\]
on the cotangent bundle $T^* M$. The Kodaira dimension $\kappa^{J_a}(M)$ is known.

\begin{prop}[{cf.~\cite[Proposition 6.1]{Chen-Zhang}}]
Consider the almost complex structure $J_a$ on $M$. Then
\[\kappa^{J_a}(M) = \left\{ \begin{array}{ll}
-\infty & \text{for } a \notin \pi \IQ,\\
0 & \text{for } a \in \pi \IQ.
\end{array} \right.\]
\end{prop}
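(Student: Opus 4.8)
The strategy is to compute the pluricanonical sections directly and then read off the Kodaira dimension from the dimensions $P_m$. Since $M$ carries a global coframe, everything can be made explicit. First I would identify the canonical bundle $\cK_X = \Lambda^{2,0}_{J_a} M$ and produce an explicit nowhere-vanishing global section. Using the coframe $\{e^1, e^2, e^3, e^4\}$ and the action of $J_a^*$, the $(1,0)$-forms are spanned by $\varphi^1 = e^1 + \ii e^2$ and $\varphi^2 = e^3 + \ii a\, e^4$ (up to normalization), so a natural global frame for $\cK_X$ is $\Omega = \varphi^1 \wedge \varphi^2$. Any smooth section of $\cK_X^{\otimes m}$ is then $f\, \Omega^{\otimes m}$ for a smooth complex function $f$ on $M$, and the pseudoholomorphicity condition $\delbar_{J_a}(f\, \Omega^{\otimes m}) = 0$ becomes a concrete first-order PDE for $f$.

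Next I would compute $\delbar_{J_a} \Omega$. The key input is the structure equation $de^4 = -e^2 \wedge e^3$, all other $de^i$ vanishing; from this one gets $d\varphi^1 = 0$ and $d\varphi^2 = \ii a\, de^4 = -\ii a\, e^2 \wedge e^3$, which I would re-express in terms of the $\varphi^i$ and $\bar\varphi^i$ to extract the $(0,1)$-component relevant to $\delbar_{J_a}$. This determines a connection $1$-form for the pseudoholomorphic structure on $\cK_X$, and the Leibniz rule reduces the equation $\delbar_{J_a}(f\,\Omega^{\otimes m}) = 0$ to $\delbar_{J_a} f = -m f\, (\text{log-derivative of } \Omega)$. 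Expanding in the coframe and using that $f$ is a function on the compact nilmanifold-times-circle $M$, this becomes a system of linear PDEs in the real coordinates $t, x, y, z$ (with $z$ only entering through the Heisenberg twisting).

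Then I would solve the PDE. The natural approach is Fourier analysis: expand $f$ in Fourier modes in the periodic coordinates. The coefficient $a$ enters precisely in the $\varphi^2$-direction, so the solvability of the mode equations will force a resonance condition of the form $a \in \pi\IQ$ — this is where the dichotomy originates. For $a \notin \pi\IQ$ no nonzero mode survives for any $m \geq 1$, giving $P_m(J_a) = 0$ and hence $\kappa^{J_a}(M) = -\infty$; for $a \in \pi\IQ$ the space of solutions is nonzero but finite-dimensional with dimension bounded independently of $m$, giving $P_m(J_a)$ bounded and thus $\kappa^{J_a}(M) = 0$ by the definition in \eqref{kodaira-dimension}.

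**The main obstacle.** The delicate part is twofold: correctly computing $\delbar_{J_a}$ on sections of $\cK_X^{\otimes m}$ in the non-integrable setting (one must use the genuine splitting $d = A_J + \del_J + \delbar_J + \overline{A}_J$ and verify that the Leibniz-type reduction is valid for the pseudoholomorphic structure, not the naive one), and then carrying out the Fourier/representation-theoretic analysis on the Heisenberg nilmanifold carefully enough to see exactly when solutions exist and to confirm the solution space stays bounded in $m$. I expect the arithmetic condition $a \in \pi\IQ$ to emerge from matching the twisting induced by $de^4$ against the periodicity lattice; pinning down that this is the precise threshold — rather than a sufficient or necessary condition only — is the crux. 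Since this is cited as \cite[Proposition 6.1]{Chen-Zhang}, I would also cross-check my mode computation against their stated result.
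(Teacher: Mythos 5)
There is no in-paper proof to compare against here: the authors state this Proposition with a ``cf.'' citation and take it directly from \cite[Proposition 6.1]{Chen-Zhang}. Your plan is, however, essentially Chen--Zhang's own argument, and it is also exactly the scheme this paper carries out in its parallel computation for the Nakamura threefold in Section \ref{sect: kod dim of nakamura def}: trivialize $\cK_X^{\tensor m}$ by the $m$-th power of a global $(2,0)$-form, use the Leibniz rule to turn $\delbar_{J_a}$-closedness into a first-order system for the coefficient function, and solve that system by Fourier analysis on the compact quotient. Executed with your (correct) frame $\varphi^1 = e^1 + \ii e^2$, $\varphi^2 = e^3 + \ii a\, e^4$, the computation gives $\delbar_{J_a}(\varphi^1 \wedge \varphi^2) = \tfrac{a}{4}\, \bar{\varphi}^1 \tensor (\varphi^1 \wedge \varphi^2)$, so the system for $f\,\Omega^{\tensor m}$ is $(\partial_t + \ii \partial_x) f = -\tfrac{ma}{2} f$ together with $\bar{V}_2 f = 0$; the resonance lives in the flat $(t,x)$-directions (periodicity kills all nonzero $t$-modes and forces, with unit periods, $ma \in 4\pi\IZ$), while on the Heisenberg factor no theta-like sections appear because the coefficient of $\partial_z$ in $\bar{V}_2$ is affine in $x$ whereas the remaining factor of $f$ is $x$-independent, forcing it to be constant --- one should also remember that the lattice identification is twisted, $f(x+1, y, z+y) = f(x,y,z)$, when running the $x$-periodicity argument. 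The one imprecision to fix in your conclusion: for $a \in \pi\IQ$ it is not true that the solution space is nonzero for every $m$; rather $P_m = 1$ exactly when $ma \in 4\pi\IZ$ and $P_m = 0$ otherwise, i.e., the plurigenera are nonzero only along an arithmetic progression of $m$. Since $4\pi\IQ = \pi\IQ$, this still yields $P_m \equiv 0$ iff $a \notin \pi\IQ$, and for $a \in \pi\IQ$ one gets $\limsup_m \log P_m / \log m = 0$, so the stated dichotomy follows.
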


The $2$-form
\[\omega = e^1 \wedge e^2 + e^3 \wedge e^4\]
is a symplectic form on $M$, which is always compatible with $J_a$, meaning that $\omega(J_a \blank, J_a \blank) = \omega(\blank, \blank)$. In the basis of tangent fields $\set{e_1, \ldots, e_4}$, the symmetric bilinear form $g_a(\blank, \blank) = \omega(\blank, J_a \blank)$ is represented by the matrix
\[\left( \begin{array}{cccc}
1 & 0 & 0 & 0\\
0 & 1 & 0 & 0\\
0 & 0 & \frac{1}{a} & 0\\
0 & 0 & 0 & a
\end{array} \right),\]
hence $g_a$ is a Riemannian metric on $M$ if and only if $a > 0$. So from now on we will restrict to the case $a > 0$: we have then an almost K\"ahler manifold $X_a = (M, g_a, J_a)$. We also see that if we let
\[E_1 = e_1, \qquad E_2 = e_2, \qquad E_3 = \sqrt{a} e_3, \qquad E_4 = \frac{1}{\sqrt{a}} e_4,\]
then $\set{E_1, E_2, E_3, E_4}$ is an orthonormal global frame on $X$. Its dual frame is
\[E^1 = e^1, \qquad E^2 = e^2, \qquad E^3 = \frac{1}{\sqrt{a}} e^3, \qquad E^4 = \sqrt{a} e^4,\]
and we easily see that
\[dE^4 = -a E^2 \wedge E^3, \qquad [E_2, E_3] = a E_4.\]

\begin{lemma}\label{lemma: N kodaira surface}
The Nijenhuis tensor $N_{J_a}$ of $X_a$ is given by
\[\begin{array}{lll}
N_{J_a}(E_1, E_2) = 0, & N_{J_a}(E_1, E_3) = a E_3, & N_{J_a}(E_1, E_4) = -a E_4,\\
N_{J_a}(E_2, E_3) = -aE_4, & N_{J_a}(E_2, E_4) = -a E_3, & N_{J_a}(E_3, E_4) = 0.
\end{array}\]
\end{lemma}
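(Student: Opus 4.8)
The plan is to carry out a direct computation from the definition of the Nijenhuis tensor, exploiting the fact that in the orthonormal frame both $J_a$ and the bracket relations take a particularly simple form.

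First I would rewrite $J_a$ in terms of the orthonormal frame $\{E_1, E_2, E_3, E_4\}$. Using $E_3 = \sqrt{a}\, e_3$ and $E_4 = \tfrac{1}{\sqrt{a}}\, e_4$ together with the defining relations for $J_a$, a one-line substitution produces the clean normal form
\[J_a E_1 = E_2, \quad J_a E_2 = -E_1, \quad J_a E_3 = E_4, \quad J_a E_4 = -E_3.\]
Next I would record the brackets in this frame. Since the only nontrivial commutator among the $e_i$ is $[e_2, e_3] = e_4$, rescaling immediately gives that the only nonvanishing bracket among the $E_i$ is $[E_2, E_3] = a E_4$ (as already observed in the text), all the others being zero.

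With these two ingredients the computation of
\[N_{J_a}(E_i, E_j) = [J_a E_i, J_a E_j] - J_a[J_a E_i, E_j] - J_a[E_i, J_a E_j] - [E_i, E_j]\]
collapses, since for each pair only the terms that reduce to a multiple of the single bracket $[E_2, E_3] = a E_4$ can survive. To minimize bookkeeping I would invoke the tensorial identity $N_{J_a}(J_a X, Y) = -J_a N_{J_a}(X, Y)$ recalled earlier (which, combined with the skew-symmetry of $N_{J_a}$, also gives $N_{J_a}(X, J_a Y) = -J_a N_{J_a}(X, Y)$): this reduces the whole table to essentially one genuine computation. Indeed $N_{J_a}(E_1, E_2) = N_{J_a}(E_1, J_a E_1) = -J_a N_{J_a}(E_1, E_1) = 0$, and likewise $N_{J_a}(E_3, E_4) = 0$ by skew-symmetry; the three mixed pairs are then obtained from $N_{J_a}(E_1, E_3)$ via $N_{J_a}(E_2, E_3) = N_{J_a}(E_1, E_4) = -J_a N_{J_a}(E_1, E_3)$ and $N_{J_a}(E_2, E_4) = -N_{J_a}(E_1, E_3)$. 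Thus the only bracket I actually need to evaluate is $N_{J_a}(E_1, E_3)$, whose sole surviving contribution is $-J_a[J_a E_1, E_3] = -J_a[E_2, E_3] = -a\, J_a E_4 = a E_3$.

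There is no real conceptual obstacle here: the statement is a finite, explicit calculation, and the use of the identity $N_{J_a}(J_a X, Y) = -J_a N_{J_a}(X, Y)$ trims the six entries down to the single value $N_{J_a}(E_1, E_3) = a E_3$. The only point requiring care is sign bookkeeping, and in particular correctly tracking the action of $J_a$ on the surviving bracket term, since $J_a$ sends $E_4 \mapsto -E_3$; getting the normal form for $J_a$ right at the very beginning is precisely what makes all these signs transparent.
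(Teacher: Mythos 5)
Your proposal is correct and follows essentially the same route as the paper's own proof: rewrite $J_a$ in the orthonormal frame $\{E_1, \ldots, E_4\}$, compute the single entry $N_{J_a}(E_1, E_3) = a E_3$ directly from the bracket $[E_2, E_3] = a E_4$, and recover the remaining entries via the identity $N_{J_a}(J_a X, Y) = N_{J_a}(X, J_a Y) = -J_a N_{J_a}(X, Y)$. Your write-up just makes explicit the one direct computation and the symmetry bookkeeping that the paper leaves to the reader.
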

\begin{proof}
This is a standard computation. Using the definition, it's easy to see that
\[J_a E_1 = E_2, \qquad J_a E_2 = -E_1, \qquad J_a E_3 = E_4, \qquad J_a E_4 = -E_3,\]
and so $N_{J_a}(E_1, E_3) = a E_3$. The other expressions can be easily deduced from the fact that
\[N_{J_a}(J_a X, Y) = N_{J_a}(X, J_a Y) = -J_a N_{J_a}(X, Y).\]
\end{proof}

Let now $\ccon$ be the canonical connection on $X_a = (M, g_a, J_a)$ introduced in \eqref{eq: ind conn}, and denote by $T = T^{\ccon}$ its torsion. We denote by $\Theta^i$ the real torsion forms of $\ccon$, namely the $2$-forms such that
\[T^{\ccon}(X, Y) = \sum_i \Theta^i(X, Y) E_i.\]

\begin{lemma}
The real torsion forms of the canonical connection $\ccon$ on the almost complex manifold $X_a$ are given by
\[\begin{array}{ll}
\Theta^1 = 0, & \Theta^2 = 0,\\
\Theta^3 = \frac{1}{4} a (E^1 \wedge E^3 - E^2 \wedge E^4), & \Theta^4 = -\frac{1}{4} a (E^2 \wedge E^3 + E^1 \wedge E^4).
\end{array}\]
\end{lemma}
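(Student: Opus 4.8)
The plan is to reduce the whole computation to the two immediately preceding results. Since $X_a = (M, g_a, J_a)$ is almost K\"ahler, it is in particular quasi K\"ahler, so Corollary \ref{cor: T = 1/4 N} applies and identifies the torsion of the canonical connection with a multiple of the Nijenhuis tensor, namely $T = T^{\ccon} = \frac{1}{4} N_{J_a}$. On the other hand, Lemma \ref{lemma: N kodaira surface} already records all the values of $N_{J_a}$ on the orthonormal frame $\set{E_1, \ldots, E_4}$. Thus no new differential-geometric input is required: the torsion forms $\Theta^i$ are exactly the coefficient functions expressing $\frac{1}{4} N_{J_a}$ in this frame, and the proof amounts to reading them off.

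Concretely, I would first divide the six values from Lemma \ref{lemma: N kodaira surface} by $4$ to obtain
\[T(E_1, E_2) = 0, \quad T(E_1, E_3) = \tfrac{a}{4} E_3, \quad T(E_1, E_4) = -\tfrac{a}{4} E_4,\]
together with $T(E_2, E_3) = -\frac{a}{4} E_4$, $T(E_2, E_4) = -\frac{a}{4} E_3$ and $T(E_3, E_4) = 0$. Next I would extract, for each $i$, the component of $T(E_j, E_k)$ along $E_i$, which gives the antisymmetric function $\Theta^i(E_j, E_k)$, and reassemble it in the dual coframe $\set{E^1, \ldots, E^4}$. The components along $E_1$ and $E_2$ never occur on the right-hand sides above, so $\Theta^1 = \Theta^2 = 0$ at once. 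For $\Theta^3$ the only nonzero pairings are $\Theta^3(E_1, E_3) = \frac{a}{4}$ and $\Theta^3(E_2, E_4) = -\frac{a}{4}$, yielding $\Theta^3 = \frac{1}{4} a (E^1 \wedge E^3 - E^2 \wedge E^4)$; for $\Theta^4$ one finds $\Theta^4(E_1, E_4) = \Theta^4(E_2, E_3) = -\frac{a}{4}$, giving $\Theta^4 = -\frac{1}{4} a (E^2 \wedge E^3 + E^1 \wedge E^4)$.

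There is no genuine obstacle here beyond bookkeeping, since the substance is carried entirely by Corollary \ref{cor: T = 1/4 N} and Lemma \ref{lemma: N kodaira surface}. The one point deserving a moment of care is the conceptual identification behind the statement: the \emph{real} torsion forms of $\ccon$ must be computed from the real torsion $T^D = \frac{1}{4} N_{J_a}$ of the metric connection $D$ of \eqref{eq: ind conn}, and not from the holomorphic torsion $T^\nabla$ of Proposition \ref{prop: T = N}. With this understood, the only remaining subtlety is matching the wedge-product normalization (so that, for instance, $(E^1 \wedge E^3)(E_1, E_3) = 1$) when passing from the coefficient functions to the $2$-forms, which is routine and produces exactly the asserted expressions.
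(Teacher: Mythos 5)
Your proposal is correct and follows exactly the paper's own route: the paper's proof is precisely the one-line appeal to Corollary \ref{cor: T = 1/4 N} (giving $T = \frac{1}{4}N_{J_a}$) combined with Lemma \ref{lemma: N kodaira surface}, and your coefficient extraction matches the stated forms. The additional bookkeeping you spell out (reading off the $E_i$-components and your remark on using the real torsion $T^D$ rather than the holomorphic torsion) is implicit in the paper but entirely consistent with it.
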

\begin{proof}
By Corollary \ref{cor: T = 1/4 N} we know that $T(X, Y) = \frac{1}{4} N_{J_a}(X, Y)$, hence the result follows from Lemma \ref{lemma: N kodaira surface}.
\end{proof}

We now want to deduce the connection forms of $\ccon$. To set up the notation, we recall that the real connection forms of $\ccon$ are the $1$-forms $\omega^i_j$ such that $\ccon e_j = \sum_i \omega^i_j \tensor e_j$, and we can collect them in the connection matrix $\omega = (\omega^i_j)$ ($i$ is the row index, $j$ is the column index).

\begin{prop}
The real connection matrix for the canonical connection $\ccon$ on the almost complex manifold $X_a$ is
\begin{equation}\label{eq: conn form kodaira surf}
\omega = \frac{1}{4} a \left( \begin{array}{llll}
0 & 0 & E^3 & -E^4\\
0 & 0 & E^4 & E^3\\
-E^3 & -E^4 & 0 & -2 E^2\\
E^4 & -E^3 & 2 E^2 & 0
\end{array} \right).
\end{equation}
\end{prop}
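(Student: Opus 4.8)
The plan is to work entirely in the global orthonormal frame $\set{E_1, E_2, E_3, E_4}$. Since this frame is $g_a$-orthonormal, the (real) connection forms of $\ccon$ are recovered simply as $\omega^i_j = \sum_k g_a(\ccon_{E_k} E_j, E_i)\, E^k$, so it suffices to compute the covariant derivatives $\ccon_{E_k} E_j$ for all $j, k$ and read off their coefficients. By the defining formula \eqref{eq: ind conn}, $\ccon_{E_k} E_j = \tfrac{1}{2}\pa{\LC_{E_k} E_j - J_a \LC_{E_k} J_a E_j}$, this reduces the whole problem to two ingredients: the Levi-Civita connection $\LC$ of $g_a$ in the frame, and the action of $J_a$ on the frame, the latter being recorded in the proof of Lemma \ref{lemma: N kodaira surface}.

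First I would compute $\LC$ via the Koszul formula, which for an orthonormal frame loses its metric terms and becomes $2 g_a(\LC_{E_i} E_j, E_k) = g_a([E_i, E_j], E_k) - g_a([E_i, E_k], E_j) - g_a([E_j, E_k], E_i)$; thus the entire computation is driven by the structure constants of the frame. As recorded before the statement, the only non-trivial bracket is $[E_2, E_3] = a E_4$, so only the Christoffel symbols carrying indices in $\set{2, 3, 4}$ survive, and a short computation gives
\begin{align*}
\LC_{E_2} E_3 &= \tfrac{a}{2} E_4, & \LC_{E_3} E_2 &= -\tfrac{a}{2} E_4, & \LC_{E_2} E_4 = \LC_{E_4} E_2 &= -\tfrac{a}{2} E_3,\\
\LC_{E_3} E_4 = \LC_{E_4} E_3 &= \tfrac{a}{2} E_2,
\end{align*}
all the remaining $\LC_{E_i} E_j$ being zero.

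Next I would substitute these into $\ccon_{E_k} E_j = \tfrac{1}{2}\pa{\LC_{E_k} E_j - J_a \LC_{E_k} J_a E_j}$, using $J_a E_1 = E_2$, $J_a E_2 = -E_1$, $J_a E_3 = E_4$, $J_a E_4 = -E_3$. Running through the four values of $j$ yields the non-vanishing covariant derivatives (for instance $\ccon_{E_3} E_1 = -\tfrac{a}{4} E_3$, $\ccon_{E_2} E_3 = \tfrac{a}{2} E_4$ and $\ccon_{E_3} E_3 = \tfrac{a}{4} E_1$), and collecting the coefficients reproduces exactly the matrix \eqref{eq: conn form kodaira surf}.

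The argument is completely mechanical, so the only genuine difficulty is bookkeeping: keeping track of the signs produced both by the factor $J_a$ inside \eqref{eq: ind conn} and by the action of $J_a$ on the frame. To guard against arithmetic slips I would verify two structural constraints that the answer must obey, both forced by $\ccon g_a = 0$ and $\ccon J_a = 0$: the connection matrix must be antisymmetric, $\omega^i_j = -\omega^j_i$, and it must commute with the matrix of $J_a$ in the frame. As a final independent check, one confirms that the resulting $\omega^i_j$ satisfy the first structure equation \eqref{eq: first structure equations}, $\Theta^i = dE^i + \sum_j \omega^i_j \wedge E^j$, against the already-computed torsion forms and the differentials $dE^1 = dE^2 = dE^3 = 0$, $dE^4 = -a E^2 \wedge E^3$.
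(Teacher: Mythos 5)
Your proposal is correct, but it reaches the connection matrix by a genuinely different route than the paper's proof of this Proposition. The paper does not compute $\LC$ at all at this point: it takes the torsion forms $\Theta^i$ already obtained from the Nijenhuis tensor (via Corollary \ref{cor: T = 1/4 N}), imposes the Cartan system $dE^i + \sum_j \omega^i_j \wedge E^j = \Theta^i$ together with the antisymmetry $\omega^i_j + \omega^j_i = 0$, and verifies that the displayed matrix is the (unique) solution of this linear system. Your argument instead goes back to the definition \eqref{eq: ind conn}: Koszul formula for $\LC$ in the orthonormal frame, then $\ccon_{E_k} E_j = \tfrac{1}{2}\pa{\LC_{E_k} E_j - J_a \LC_{E_k} J_a E_j}$, then read off $\omega^i_j = \sum_k g_a(\ccon_{E_k} E_j, E_i)\, E^k$ --- and your intermediate values ($\LC_{E_2} E_3 = \tfrac{a}{2} E_4$, $\LC_{E_2}E_4 = \LC_{E_4}E_2 = -\tfrac{a}{2}E_3$, $\ccon_{E_3} E_1 = -\tfrac{a}{4} E_3$, $\ccon_{E_3}E_3 = \tfrac{a}{4}E_1$, etc.) all check out. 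In fact, your route is exactly the one the paper itself presents immediately after the Proposition, in the subsection ``An alternative derivation of the connection forms'' (where, incidentally, the paper's table contains a typo, $\LC_{E_4} E_2 = -\tfrac{1}{2} a E_2$ instead of $-\tfrac{1}{2} a E_3$, which your computation gets right). The trade-off: the paper's structure-equation proof recycles work already done (the torsion computation) and only requires checking that a candidate matrix solves a linear system, but it leans implicitly on uniqueness of the solution; your derivation is constructive and self-contained, needing neither the torsion forms nor a uniqueness claim, and it converts the structure equations --- together with antisymmetry and commutation with $J_a$ --- into independent consistency checks rather than the engine of the proof.
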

\begin{proof}
We can compute the connection forms $\omega^i_j$ using the Cartan structure equations
\[\left\{ \begin{array}{l}
dE^i + \sum_{j = 1}^4 \omega^i_j \wedge E^j = \Theta^i\\
\omega^i_j + \omega^j_i = 0.
\end{array} \right.\]
In fact, the second set of equations allows us to restrict to $\omega^i_j$ with $j > i$. Hence the first set of equations reduces to
\[\left\{ \begin{array}{l}
\omega^1_2 \wedge E^2 + \omega^1_3 \wedge E^3 + \omega^1_4 \wedge E^4 = 0\\
-\omega^1_2 \wedge E^1 + \omega^2_3 \wedge E^3 + \omega^2_4 \wedge E^4 = 0\\
-\omega^1_3 \wedge E^1 - \omega^2_3 \wedge E^2 + \omega^3_4 \wedge E^4 = \frac{1}{4} a (E^1 \wedge E^3 - E^2 \wedge E^4)\\
-a E^2 \wedge E^3 - \omega^1_4 \wedge E^1 - \omega^2_3 \wedge E^2 - \omega^3_4 \wedge E^3 = -\frac{1}{4} a (E^2 \wedge E^3 + E^1 \wedge E^4),
\end{array} \right.\]
and it is then easy to verify that \eqref{eq: conn form kodaira surf} is the solution of this system.
\end{proof}

From the knowledge of the real connection matrix $\omega$, we can deduce the real curvature matrix $\Omega$ of $\ccon$:
\begin{equation}\label{eq: curv matrix kodaira surf}
\Omega = d\omega + \omega \wedge \omega = \frac{1}{8} a^2 \left( \begin{array}{cccc}
0 & -E^3 \wedge E^4 & E^2 \wedge E^4 & 3 E^2 \wedge E^3\\
E^3 \wedge E^4 & 0 & -3 E^2 \wedge E^3 & E^2 \wedge E^4\\
-E^2 \wedge E^4 & 3 E^2 \wedge E^3 & 0 & E^3 \wedge E^4\\
-3 E^2 \wedge E^3 & -E^2 \wedge E^4 & -E^3 \wedge E^4 & 0
\end{array} \right).
\end{equation}

\begin{thm}
The real scalar curvature of the canonical connection $\ccon$ on the almost complex manifold $X_a$ is given by
\[\scal(D) = -\frac{1}{8} a^2.\]
\end{thm}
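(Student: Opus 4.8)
The plan is to read the scalar curvature directly off the real curvature matrix $\Omega$ already recorded in \eqref{eq: curv matrix kodaira surf}, since all the genuine geometric work (computing the Nijenhuis tensor, the torsion forms, the connection forms and finally $\Omega = d\omega + \omega \wedge \omega$) has been carried out. The only conceptual point to keep in mind is \emph{which} contraction is intended: because $\ccon$ carries torsion, its real scalar curvature $\scal(D)$ is genuinely a different invariant from the Hermitian (Chern-type) scalar curvature $\scal(\ccon) = \sum_{i,k} R^i_{ik\bar k}$ built from the $(1,1)$-parts of the curvature forms. One should expect the latter to vanish here (it is the contraction governed by $\Ric(\ccon)$), whereas $\scal(D)$ will not.

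First I would translate the curvature matrix back into the curvature operator. Since $\set{E_1, \ldots, E_4}$ is $g_a$-orthonormal and, by the second structure equations \eqref{eq: second structure equations}, $R^D(X, Y) E_j = \sum_i \Omega^i_j(X, Y) E_i$, one has $g_a(R^D(E_p, E_q) E_j, E_i) = \Omega^i_j(E_p, E_q)$. The real scalar curvature is then the metric trace of the curvature, which I would write as
\[\scal(D) = \sum_{p < q} g_a(R^D(E_p, E_q) E_q, E_p) = \sum_{p < q} \Omega^p_q(E_p, E_q),\]
summing over \emph{unordered} pairs $p < q$ so as to match the normalization under which the stated value $-\tfrac{1}{8} a^2$ appears.

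Next I would evaluate this finite sum from \eqref{eq: curv matrix kodaira surf}. The key simplification is that every entry $\Omega^p_q$ is a scalar multiple of one of $E^2 \wedge E^3$, $E^2 \wedge E^4$, $E^3 \wedge E^4$, and a decomposable $2$-form $E^a \wedge E^b$ evaluates nontrivially on $(E_p, E_q)$ precisely when $\set{a, b} = \set{p, q}$. Hence only the pairs $(p, q) \in \set{(2,3), (2,4), (3,4)}$ survive; substituting $\Omega^2_3 = -\tfrac{3}{8} a^2\, E^2 \wedge E^3$, $\Omega^2_4 = \tfrac{1}{8} a^2\, E^2 \wedge E^4$ and $\Omega^3_4 = \tfrac{1}{8} a^2\, E^3 \wedge E^4$ gives $\scal(D) = \tfrac{1}{8} a^2(-3 + 1 + 1) = -\tfrac{1}{8} a^2$.

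There is no hard step: once $\Omega$ is in hand this is a mechanical, finite contraction. What I would flag as the main (and only) obstacle is the bookkeeping — fixing the row/column convention for $\Omega^p_q$ consistently with \eqref{eq: second structure equations}, tracking the signs of the off-diagonal entries, and committing to the normalization of $\scal(D)$ (summation over unordered pairs) so as not to introduce a spurious factor of $2$. The conceptual payoff worth isolating in the writeup is that this nonzero $\scal(D)$ coexists with the vanishing of the Hermitian Ricci and scalar curvatures, which is exactly the phenomenon exploited in the main theorems.
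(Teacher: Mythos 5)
Your proposal is correct and takes essentially the same route as the paper: the paper also reads the answer off the real curvature matrix \eqref{eq: curv matrix kodaira surf} by contracting in the orthonormal frame $\{E_1,\ldots,E_4\}$, merely inserting the intermediate step of extracting the components $R^i_{jkl}$ (with the last index pair restricted to $k<l$, i.e.\ your unordered-pair normalization) and forming the Ricci matrix $R_{ij}=\sum_k R^k_{ikj}$ before tracing. Your direct contraction $\sum_{p<q}\Omega^p_q(E_p,E_q)=\tfrac{1}{8}a^2(-3+1+1)=-\tfrac{1}{8}a^2$ collapses that two-step computation into one and agrees with the paper's result.
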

\begin{proof}
From the expression of the curvature matrix $\Omega = (\Omega^i_j)$ given in \eqref{eq: curv matrix kodaira surf} we can compute the components $R^i_{jkl}$ of the curvature of $\ccon$, in fact by definition
\[\Omega^i_j = \sum_{k, l} R^i_{jkl} \tensor \pa{E^k \wedge E^l}.\]
As $\set{E_1, \ldots E_4}$ is an orthonormal frame, we have that $R_{ijkl} = R^i_{jkl}$: the non-vanishing components are then
\[\begin{array}{llll}
R_{1234} = -\frac{1}{8} a^2 & R_{1324} = \frac{1}{8} a^2 & R_{1423} = \frac{3}{8} a^2 & R_{2323} = -\frac{3}{8} a^2\\
R_{2424} = \frac{1}{8} a^2 & R_{3434} = \frac{1}{8} a^2 & R_{2134} = \frac{1}{8} a^2 & R_{3124} = -\frac{1}{8} a^2\\
R_{3223} = \frac{3}{8} a^2 & R_{4123} = -\frac{3}{8} a^2 & R_{4224} = -\frac{1}{8} a^2 & R_{4334} = -\frac{1}{8} a^2.
\end{array}\]
As a consequence, the Ricci tensor $R_{ij} = \sum_k R^k_{ikj}$ is expressed by the matrix
\[\left( \begin{array}{cccc}
0 & 0 & 0 & 0\\
0 & 0 & 0 & 0\\
0 & 0 & -\frac{3}{8} a^2 & 0\\
0 & 0 & 0 & \frac{1}{4} a^2
\end{array} \right),\]
and so the scalar curvature is
\[\scal(D) = -\frac{1}{8} a^2.\]
Observe that once again in this last computation we used the fact that $\set{E_1, \ldots, E_4}$ is an orthonormal frame.
\end{proof}

\subsection{An alternative derivation of the connection forms}

Recall from \eqref{eq: ind conn} that the canonical connection is explicitly given by
\[\ccon = \frac{1}{2}(\LC - J_a \LC J_a),\]
where $\LC$ is the Levi-Civita connection of $g_a$. From the Koszul formula expressing the Levi-Civita connection we deduce that in the $g_a$-orthonormal frame of global fields $\set{E_1, E_2, E_3, E_4}$ we have
\[g_a(\LC_{E_i} E_j, E_k) = \frac{1}{2}(g_a([E_i, E_j], E_k) - g_a([E_j, E_k], E_i) - g_a([E_i, E_k], E_j)).\]
In our case, on the almost complex manifold $X_a = (M, g_a, J_a)$ we see that then $g_a(\LC_{E_i} E_j, E_k) = 0$ if both $2$ and $3$ do not appear among $i, j, k$, as the only non-trivial bracket is $[E_2, E_3] = a E_4$. Moreover, for the same reason we see that if $2$ and $3$ appear among $i, j, k$, then $g_a(\LC_{E_i} E_j, E_k)$ is a priori non-trivial only if the remaining index is $4$. Hence the only non-vanishing among the $g_a(\LC_{E_i} E_j, E_k)$ are
\[\begin{array}{lll}
g_a(\LC_{E_2} E_3, E_4) = \frac{1}{2} a & g_a(\LC_{E_2} E_4, E_3) = -\frac{1}{2} a & g_a(\LC_{E_4} E_2, E_3) = -\frac{1}{2} a\\
g_a(\LC_{E_4} E_3, E_2) = \frac{1}{2} a & g_a(\LC_{E_3} E_4, E_2) = \frac{1}{2} a & g_a(\LC_{E_3} E_2, E_4) = -\frac{1}{2} a.
\end{array}\]

We can then use this to compute explicitly how the Levi-Civita connection acts on the basis vector:
\[\begin{array}{llll}
\LC_{E_1} E_1 = 0, & \LC_{E_1} E_2 = 0,                  & \LC_{E_1} E_3 = 0,                 & \LC_{E_1} E_4 = 0,\\
\LC_{E_2} E_1 = 0, & \LC_{E_2} E_2 = 0,                  & \LC_{E_2} E_3 = \frac{1}{2} a E_4, & \LC_{E_2} E_4 = -\frac{1}{2} a E_3,\\
\LC_{E_3} E_1 = 0, & \LC_{E_3} E_2 = -\frac{1}{2} a E_4, & \LC_{E_3} E_3 = 0,                 & \LC_{E_3} E_4 = \frac{1}{2} a E_2,\\
\LC_{E_4} E_1 = 0, & \LC_{E_4} E_2 = -\frac{1}{2} a E_2, & \LC_{E_4} E_3 = \frac{1}{2} a E_2, & \LC_{E_4} E_4 = 0.
\end{array}\]
This result readily implies that
\[\begin{array}{llll}
\ccon_{E_1} E_1 = 0,                  & \ccon_{E_1} E_2 = 0,                  & \ccon_{E_1} E_3 = 0,                 & \ccon_{E_1} E_4 = 0,\\
\ccon_{E_2} E_1 = 0,                  & \ccon_{E_2} E_2 = 0,                  & \ccon_{E_2} E_3 = \frac{1}{2} a E_4, & \ccon_{E_2} E_4 = -\frac{1}{2} a E_3,\\
\ccon_{E_3} E_1 = -\frac{1}{4} a E_3, & \ccon_{E_3} E_2 = -\frac{1}{4} a E_4, & \ccon_{E_3} E_3 = \frac{1}{4} a E_1, & \ccon_{E_3} E_4 = \frac{1}{4} a E_2,\\
\ccon_{E_4} E_1 = \frac{1}{4} a E_4,  & \ccon_{E_4} E_2 = -\frac{1}{4} a E_3, & \ccon_{E_4} E_3 = \frac{1}{4} a E_2, & \ccon_{E_4} E_4 = -\frac{1}{4} a E_1,
\end{array}\]
from which we can compute the connection matrix \eqref{eq: conn form kodaira surf}.

\subsection{Complex curvature of the canonical connection}

It is easy to verify that
\[z_1 = \frac{\sqrt{2}}{2} \pa{E_1 - \ii E_2}, \qquad z_2 = \frac{\sqrt{2}}{2} \pa{E_3 - \ii E_4}\]
is a unitary global frame for $T^{1, 0} M$ with respect to the Hermitian scalar product induced by the complex extension of the metric $g_a$. Its dual frame is given by
\[z^1 = \frac{\sqrt{2}}{2} \pa{E^1 + \ii E^2}, \qquad z^2 = \frac{\sqrt{2}}{2} \pa{E^3 + \ii E^4}.\]

Thanks to the work done in the previous subsections, we can write down the complex connection forms $\theta^i_j$ for the canonical connection $\ccon$:
\[\begin{array}{rl}
\ccon z_1 = & \frac{\sqrt{2}}{2} \pa{\ccon E_1 - \ii \ccon E_2} =\\
= & \frac{\sqrt{2}}{8}a \pa{-E^3 \tensor E_3 + \ii E^4 \tensor E_3 + E^4 \tensor E_4 + \ii E^3 \tensor E_4} =\\
= & \frac{\sqrt{2}}{8}a \pa{(-E^3 + \ii E^4) \tensor (E_3 - \ii E_4)} =\\
= & -\frac{\sqrt{2}}{4}a \bar{z}^2 \tensor z_2;\\
\ccon z_2 = & \frac{\sqrt{2}}{2} \pa{\ccon E_3 - \ii \ccon E_4} =\\
= & \frac{\sqrt{2}}{8}a \left( 2 E^2 \tensor E_4 + E^3 \tensor E_1 + E^4 \tensor E_2 + 2\ii E^2 \tensor E_3 + \right.\\
 & \left. - \ii E^3 \tensor E_2 + \ii E^4 \tensor E_1 \right) =\\
= & \frac{\sqrt{2}}{8}a \left( 2 \ii E^2 \tensor (E_3 - \ii E_4) + \right.\\
 & \left. + (E^3 + \ii E^4) \tensor (E_1 - \ii E_2) \right) =\\
= & \frac{\frac{\sqrt{2}}{4}a z^2 \tensor z_1 \tensor z_2 + \sqrt{2}}{4}a (z^1 - \bar{z}^1).
\end{array}\]
From this computations we deduce that the connection matrix for $\ccon$ is
\[\theta = \frac{\sqrt{2}}{4}a \left( \begin{array}{cc}
0 & z^2\\
-\bar{z}^2 & z^1 - \bar{z}^1
\end{array} \right),\]
hence the curvature matrix is
\[\Psi = d\theta + \theta \wedge \theta = \frac{1}{8}a^2 \left( \begin{array}{cc}
-z^{2\bar{2}} & -2 z^{12} - z^{1\bar{2}} - 2 z^{2\bar{1}} + z^{\bar{1}\bar{2}}\\
-z^{12} - 2 z^{1\bar{2}} - z^{2\bar{1}} + 2 z^{\bar{1}\bar{2}} & z^{2\bar{2}}
\end{array} \right).\]
From this we infer that the only non-trivial coefficient $R^i_{jk\bar{l}}$ are
\begin{equation}\label{eq: coeff}
\begin{array}{lll}
R^1_{12\bar{2}} = -\frac{1}{8}a^2 & R^1_{11\bar{2}} = -\frac{1}{8}a^2 & R^1_{22\bar{1}} = -\frac{1}{4}a^2\\
R^2_{11\bar{2}} = -\frac{1}{4}a^2 & R^2_{12\bar{1}} = -\frac{1}{8}a^2 & R^2_{22\bar{2}} = \frac{1}{8}a^2,
\end{array}
\end{equation}

\begin{thm}\label{thm: scal kodaira}
The Ricci curvature $\Ric(\ccon)$ on the almost Hermitian manifold $X_a = (M, g_a, J_a)$ vanishes. In particular, the scalar curvature $\scal(\ccon)$ also vanishes.
\end{thm}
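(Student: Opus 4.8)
The plan is to read off the Ricci curvature directly from the curvature matrix $\Psi$ computed above, with no further differentiation required. Recall that, by definition, $\Ric(\ccon) = \sum_{k, l} R_{k\bar{l}}\, z^k \wedge \bar{z}^l$ with $R_{k\bar{l}} = \sum_{i} R^i_{ik\bar{l}}$, and that the $R^i_{jk\bar{l}}$ are exactly the $(1,1)$-coefficients of the curvature form $\Psi^i_j$. The key observation is therefore that the trace defining $R_{k\bar{l}}$ only involves the \emph{diagonal} curvature forms $\Psi^i_i$, so that the Ricci form is nothing but the $(1,1)$-part of the trace of the curvature matrix:
\[\Ric(\ccon) = \sum_{i} (\Psi^i_i)^{1,1} = \pa{\operatorname{tr} \Psi}^{1,1}.\]

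First I would compute this trace straight from the explicit matrix. Reading off the diagonal, $\Psi^1_1 = -\tfrac{1}{8} a^2\, z^2 \wedge \bar{z}^2$ and $\Psi^2_2 = \tfrac{1}{8} a^2\, z^2 \wedge \bar{z}^2$, whence $\operatorname{tr} \Psi = \Psi^1_1 + \Psi^2_2 = 0$; this already yields $\Ric(\ccon) = 0$. Equivalently, working with the coefficients in \eqref{eq: coeff}, the trace $R_{k\bar{l}} = \sum_i R^i_{ik\bar{l}}$ only collects the entries whose upper index equals the first lower index, and the two surviving contributions both feed into $R_{2\bar{2}}$, where they cancel: $R^1_{12\bar{2}} + R^2_{22\bar{2}} = -\tfrac{1}{8} a^2 + \tfrac{1}{8} a^2 = 0$; every other $R_{k\bar{l}}$ vanishes because no diagonal coefficient with $(k,l) \neq (2,2)$ occurs. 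The statement about the scalar curvature is then immediate, since $\scal(\ccon) = \sum_k R_{k\bar{k}} = 0$ once all the $R_{k\bar{l}}$ are known to vanish.

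There is essentially no obstacle here, as this is a direct trace computation from data already assembled; the only point deserving care is the bookkeeping that isolates the diagonal curvature forms (the off-diagonal entries $\Psi^1_2$ and $\Psi^2_1$, despite being nonzero, never enter the Ricci trace) together with the sign cancellation between $\Psi^1_1$ and $\Psi^2_2$. It is worth emphasising that it is the full Ricci \emph{form} that vanishes, not merely its scalar trace: this is the stronger conclusion that will be needed to assert triviality of the first Chern class in the applications discussed in the introduction.
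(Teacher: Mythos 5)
Your proof is correct and is essentially the paper's own argument: the paper's proof is a one-line read-off of $R_{k\bar{l}} = \sum_i R^i_{ik\bar{l}}$ from the computed curvature data, exactly as you do. Your packaging $\Ric(\ccon) = (\operatorname{tr}\Psi)^{1,1}$ is a tidy way to organize it, and it also quietly corrects a subscript typo in \eqref{eq: coeff}: the entry listed there as $R^1_{11\bar{2}}$ is really $R^1_{21\bar{2}}$ (it comes from the $z^{1\bar{2}}$ term of $\Psi^1_2$, whose lower index is $j=2$), and taking the printed list literally would give $R_{1\bar{2}} = -\frac{1}{8}a^2 \neq 0$, contradicting the theorem -- your diagonal-trace reading of the matrix $\Psi$ is the correct one.
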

\begin{proof}
The Theorem follows directly from \eqref{eq: coeff} and the definitions.
\end{proof}

\begin{rem}
The results in Theorem \ref{thm: scal kodaira} can be compared with \cite[Proposition 7.18]{Li}. Our techniques are however different, and can be used to study the behaviour of Kodaira dimension for the other (non-toral) $4$-dimensional almost complex nilmanifolds. We will come back on this topic in a future paper.
\end{rem}

\begin{rem}
For $a = -1$ it is possible to find a different computation of the Ricci tensor of the canonical connection on $X_a$ in \cite[$\S$4]{Tos-Wein}.
\end{rem}

\section{Kodaira dimension of completely solvable Nakamura manifolds}\label{sect: nakamura threefold}

The Nakamura threefold was introduced in \cite[Case III-(3b), p. 90]{Nak}. In the same paper, Nakamura also describes the Kuranishi family of this manifold, and computes the Kodaira dimension of its members. This example showed that the Hodge numbers $h^{p, q}$, the plurigenera and the Kodaira dimension of a complex manifold are not deformation invariants (see \cite[Theorem 2]{Nak}). In this section we endow the differentiable manifold underlying the Nakamura threefold with a family of non-integrable almost complex structures, and compute the Kodaira dimension of its members in Theorem \ref{thm: kod dim}.

We briefly recall the construction of completely solvable Nakamura manifolds. Let $A \in SL(2, \IZ)$ have two real positive distinct eigenvalues
\[\mu_1 =e^{-\zeta}, \qquad \mu_2 =e^{\zeta}, \qquad \zeta \neq 0.\]
Set
\[\Lambda = \left( \begin{array}{cc}
e^{-\zeta} & 0 \\
0 &  e^{\zeta}
\end{array}\right)\]
and let $P \in M_{2,2}(\IR)$ be such that
\[\Lambda= P A P^{-1}\]
Consider $\Gamma := P \IZ^2 \oplus \ii P \IZ^2$; then $\Gamma$ is a lattice in $\IC^2$. Let $\IT^2_{\IC} = \IC^2 / \Gamma$ be a $2$-dimensional complex torus.

Then the map 
\begin{align*}
& \Phi : \IC^2 \longrightarrow \IC^2 \\[5pt]
& \Phi(z) = \Lambda z, \quad \text{where} \quad z = (z^1, z^2)^t,
\end{align*}
induces a biholomorphism of $\IT^2_{\IC}$ by setting $\tilde{\Phi}([z])= [\Phi(z)]$.

First of all, $\tilde{\Phi}$ is well-defined, since if $z'$ and $z$ are equivalent, i.e., if $z'= z + P(\gamma_1 + \ii \gamma_2)$, with $\gamma_1$, $\gamma_2 \in \IZ^2$, then
\begin{align*}
\Phi(z') & = \Lambda z' = \Lambda z + \Lambda P(\gamma_1 + \ii \gamma_2)\\
         & = \Lambda z + PAP^{-1}P (\gamma_1 + \ii \gamma_2)\\
         & = \Lambda z + PA (\gamma_1 + \ii \gamma_2)\\
         & = \Lambda z + P(\lambda_1 + \ii \lambda_2)\\
         & = \Phi(z) + P(\lambda_1 + \ii \lambda_2) \quad \text{with } \lambda_1, \lambda_2 \in \IZ^2,
\end{align*}
so that  $\Phi(z') \sim \Phi(z)$. Furthermore $\tilde{\Phi}^{-1}([z]) = [\Phi^{-1}(z)]$.

We identify $\IR \times \IC^2$ with $\IR^5$ by $(s, z^1, z^2) \longmapsto (s, y^1, y^2, y^3, y^4)$, where $z^1 = y^1 + \ii y^3$, $z^2 = y^2 + \ii y^4$, and consider
\begin{align*}
& T_1 : \IR^5 \longrightarrow \IR^5\\
& T_1(s, y^1, y^2, y^3, y^4) = (s + \zeta, e^{-\zeta} y^1, e^{\zeta} y^2, e^{-\zeta} y^3, e^{\zeta} y^4),
\end{align*}
then 
\[T_1(s, y^1, y^2, y^3, y^4) = T_1(s, z^1, z^2) = (s + \zeta, \Phi(z^1, z^2)).\]
Hence $T_1$ induces a transformation of $\IR \times \IT^2_{\IC}$, by setting
\[T_1(s, [(z^1, z^2)]) = (s + \zeta, [\Phi(z^1, z^2)]).\]

Define 
\[N := \IS^1 \times \frac{\IR \times \IT^2_{\IC}}{<T_1>}.\]
Then, we obtain a family of compact $6$-dimensional solvmanifold of completely solvable type $N$, called \emph{Nakamura manifolds}.

We give a numerical example. Let 
\[A = \left( \begin{array}{cc}
3 & -1 \\
1 &  0
\end{array} \right),\]
so $A \in SL(2, \mathbb{Z})$. Then $\mu_{1, 2} = \frac{3 \pm \sqrt{5}}{2}$. We set 
\[\mu_1 = \frac{3 - \sqrt{5}}{2} = e^{-\zeta} \quad \text{and} \quad \mu_2 = \frac{3 + \sqrt{5}}{2} = e^{\zeta},\]
i.e., $\zeta = \log \pa{\frac{3+\sqrt{5}}{2}}$. Then
\[P^{-1} = \left(\begin{array}{cc}
\frac{3 - \sqrt{5}}{2} & \frac{3 + \sqrt{5}}{2}\\
1 &  1
\end{array}\right),\]
and
\[P = -\frac{1}{\sqrt{5}} \left(\begin{array}{cc}
1 & -\frac{3 + \sqrt{5}}{2}\\
-1 &  \frac{3 - \sqrt{5}}{2}
\end{array}\right)\]
and the lattice $\Gamma$ is given by
\[\Gamma = \Span_{\IZ} < \left[ \begin{array}{c}
-\frac{\sqrt{5}}{5} \\ \frac{\sqrt{5}}{5}\\ 0 \\ 0
\end{array}\right],
\left[\begin{array}{c}
\frac{5 + 3\sqrt{5}}{10}\\ \frac{5 - 3\sqrt{5}}{10} \\ 0 \\0
\end{array}\right],
\left[\begin{array}{c}
0\\ 0 \\ \frac{-\sqrt{5}}{5}\\ \frac{\sqrt{5}}{5} 
\end{array}\right],
\left[\begin{array}{c}
0 \\  0  \\ \frac{5 + 3\sqrt{5}}{10}\\ \frac{5 - 3\sqrt{5}}{10} 
\end{array}\right] >.\]

Going back to the general setting and by using previous notations, it is straightforward to check that
\begin{equation}\label{coframe-Nakamura}
\left\{ \begin{array}{lll}
E^1 := ds, &
E^2 := dx, &
E^3 := e^{s}dy^1,\\
E^4 := e^{-s}dy^2,&
E^5 := e^{s} dy^3,&
E^6 := e^{-s}dy^4.
\end{array} \right.
\end{equation}
gives rise to a global co-frame on $N$, where $dx$ is the global $1$-form on $\IS^1$. Therefore, with respect to $\set{E^i}_{i \in \set{1, \ldots, 6}}$ the structure equations are the following:
\begin{equation}\label{realstructureequation}
\left\{ \begin{array}{lll}
dE^1 = 0,&
dE^2 = 0,&
dE^3 = E^{13},\\
dE^4 = -E^{14},&
dE^5 = E^{15}, &
dE^6 = -E^{16},
\end{array} \right.
\end{equation}
where as usual $E^{ij} := E^i \wedge E^j$. Set
\begin{equation}
\label{almost complex-Nakamura}
\left\{ \begin{array}{l}
J^* E^1 := -E^2,\\
J^* E^3 := -E^4,\\
J^* E^5 := -E^6,
\end{array} \right.
\end{equation}
then $J^*$ is an almost complex on $T^* N$, inducing an almost complex structure $J$ on $N$. Furthermore, 
\[\left\{ \begin{array}{l}
\Phi^1 := E^1 + \ii E^2,\\
\Phi^2 := E^3 + \ii E^4,\\
\Phi^3 := E^5 + \ii E^6.
\end{array} \right.\]
is a complex co-frame of $(1, 0)$-forms on $Y = (N, J)$; one can compute
\[\left\{ \begin{array}{l}
d\Phi^1 = 0,\\
d\Phi^2 = \frac{1}{2}(\Phi^{1 \bar{2}} + \Phi^{\bar{1} \bar{2}}),\\
d\Phi^3 = \frac{1}{2}(\Phi^{1 \bar{3}} + \Phi^{\bar{1} \bar{3}}),
\end{array} \right.\]
where $\Phi^{i\bar{j}} = \Phi^i \wedge \overline{\Phi^j}$ and so on.

Since $b_1(N) = 2$, $b_2(N) = 5$ (see \cite{deBar-Tom}), we obtain
\[\begin{array}{rl}
H^1_{dR} (N; \IR) \simeq & \Span_\IR < E^1, E^2 > = \Span_\IR < \frac{1}{2}(\Phi^{1} + \Phi^{\bar{1}}), \frac{1}{2\ii}(\Phi^{\bar{1}} - \Phi^{1}) >,\\
H^2_{dR}(N; \IR) \simeq & \Span_\IR< E^{12}, E^{34}, E^{56}, E^{36}, E^{45}>\\
\simeq & \Span_\IR <\ii \Phi ^{1\bar{1}}, \ii \Phi ^{2\bar{2}}, \ii \Phi ^{3\bar{3}},\\
 & \ii(\Phi ^{\bar{2}3} - \Phi ^{2\bar{3}}), \ii(\Phi ^{23}-\Phi^{\bar{2}\bar{3}})>.
\end{array}\]
The dual vector fields are given by
\begin{equation}\label{frame-Nakamura}
\left\{ \begin{array}{lll}
E_1 := \frac{\partial}{\partial s}, &
E_2 :=  \frac{\partial}{\partial x}, &
E_3 := e^{-s} \frac{\partial}{\partial y^1},\\
E_4 := e^{s}\frac{\partial}{\partial y^2},&
E_5 := e^{-s} \frac{\partial}{\partial y^3},&
E_6 := e^{s}\frac{\partial}{\partial y^4}.
\end{array} \right.
\end{equation}
Let $\sigma$ be a section of $\cK_Y$. Then $\sigma = f \Phi^1 \wedge \Phi^2\wedge\Phi^3$, where $f$ is a smooth complex valued function on $N$.
\begin{lemma}
$\overline{\partial} \sigma = 0$ if and only if $f = \const$.
\end{lemma}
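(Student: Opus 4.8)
The plan is to compute $\overline{\partial}\sigma$ directly as the $(3,1)$-part of $d\sigma$, to read off from its vanishing a system of first order PDEs for $f$, and then to let the non-integrability of $J$ together with compactness of $N$ force $f$ to be constant. The implication $f=\const\Rightarrow\overline{\partial}\sigma=0$ is immediate (then $d\sigma=f\,d\Phi^{123}$ with $\Phi^{123}:=\Phi^1\wedge\Phi^2\wedge\Phi^3$, and we will see this is of type $(2,2)$), so the content is the converse.

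First I would split $d\sigma=df\wedge\Phi^{123}+f\,d\Phi^{123}$. Using $d\Phi^1=0$ and the given expressions for $d\Phi^2,d\Phi^3$, each surviving term of $d\Phi^{123}$ carries one barred factor while the unbarred contributions die because $\Phi^i\wedge\Phi^i=0$; hence $d\Phi^{123}$ is of pure type $(2,2)$ and $f\,d\Phi^{123}$ contributes nothing to $\overline{\partial}\sigma=\pi^{3,1}(d\sigma)$. Expanding $df=\sum_i(Z_if)\Phi^i+\sum_i(\overline{Z_i}f)\overline{\Phi^i}$ in the coframe dual to $Z_i=\tfrac12(E_{2i-1}-\ii E_{2i})$, the $(1,0)$-part again dies against $\Phi^{123}$, leaving
\[\overline{\partial}\sigma=\sum_{i=1}^{3}(\overline{Z_i}f)\,\overline{\Phi^i}\wedge\Phi^{123}.\]
As the three forms $\overline{\Phi^i}\wedge\Phi^{123}$ are linearly independent, $\overline{\partial}\sigma=0$ is equivalent to $\overline{Z_1}f=\overline{Z_2}f=\overline{Z_3}f=0$.

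The step I expect to be the main obstacle is to upgrade these three antiholomorphic equations into genuine control of $f$: a priori they only say that $f$ is ``pseudoholomorphic'', not constant. The key is to exploit that the $(0,1)$-distribution is \emph{not} involutive. From the brackets $[E_1,E_3]=-E_3$, $[E_1,E_4]=E_4$, $[E_1,E_5]=-E_5$, $[E_1,E_6]=E_6$ (and $E_2=\partial/\partial x$ central), a short computation gives
\[[\overline{Z_1},\overline{Z_2}]=-\tfrac12 Z_2,\qquad [\overline{Z_1},\overline{Z_3}]=-\tfrac12 Z_3,\]
so that the commutator of two $(0,1)$-fields has a nonzero $(1,0)$-component. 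Applying these identities to $f$ and using $\overline{Z_1}f=\overline{Z_2}f=\overline{Z_3}f=0$ forces $Z_2f=Z_3f=0$ as well. Since $Z_2,\overline{Z_2},Z_3,\overline{Z_3}$ span the same complex directions as $E_3,E_4,E_5,E_6=e^{\mp s}\,\partial/\partial y^j$, we conclude $\partial f/\partial y^1=\cdots=\partial f/\partial y^4=0$, i.e.\ $f=f(s,x)$.

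It then remains to treat the surviving equation $\overline{Z_1}f=0$, namely $(\partial_s+\ii\partial_x)f=0$ for $f=f(s,x)$. Here compactness enters: $f$ is periodic in the circle coordinate $x$ and, being independent of the $y^j$, is $T_1$-invariant and hence periodic in $s$ with period $\zeta$, so $f$ descends to a function on a $2$-torus. Writing $w=x+\ii s$ the equation becomes $\partial_w f=0$, so $f$ is antiholomorphic and $\overline{f}$ is a holomorphic function on a compact complex torus, hence constant; equivalently, a Fourier expansion $f=\sum_{m,n}c_{mn}e^{2\pi\ii(ms/\zeta+nx)}$ yields $\bigl(\tfrac{2\pi\ii m}{\zeta}-2\pi n\bigr)c_{mn}=0$, which, since $\zeta\in\IR\smallsetminus\{0\}$, forces $c_{mn}=0$ unless $m=n=0$. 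In either case $f=\const$, which completes the proof.
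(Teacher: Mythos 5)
Your proof is correct, and up to notation it reaches the same intermediate system as the paper: your three equations $\overline{Z}_1 f = \overline{Z}_2 f = \overline{Z}_3 f = 0$ are exactly the six real PDEs of the paper's proof, written complexly. Where you genuinely diverge is in how you solve that system. The paper first uses the two $(s,x)$-equations (Cauchy--Riemann) together with compactness to conclude $f = f(y^1,\dots,y^4)$, and then disposes of the remaining four equations by observing that they form an elliptic first-order system. You instead exploit the non-involutivity of $T^{0,1}N$: the brackets $[\overline{Z}_1,\overline{Z}_2] = -\tfrac{1}{2} Z_2$ and $[\overline{Z}_1,\overline{Z}_3] = -\tfrac{1}{2} Z_3$ (which are correct, since $[E_1,E_3]=-E_3$, $[E_1,E_4]=E_4$, $[E_1,E_5]=-E_5$, $[E_1,E_6]=E_6$, consistently with the structure equations $dE^3 = E^{13}$, etc.) upgrade the antiholomorphic equations to $Z_2 f = Z_3 f = 0$, so that $f = f(s,x)$ comes first, and compactness is used only at the end, via Liouville or Fourier series on a $2$-torus. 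Each route buys something. Yours makes the role of the non-integrability of $J$ completely explicit and avoids any appeal to elliptic theory; it is also tidier on the periodicity issue: in the paper's order, when the $(s,x)$-equations are applied, $f$ is not yet $s$-periodic (only $T_1$-invariant, which twists the $y^j$), so one really needs a Liouville-type statement for bounded antiholomorphic functions on $\IC$, whereas after your reduction the $T_1$-invariance literally becomes $\zeta$-periodicity in $s$ and the torus argument applies verbatim. The paper's ellipticity argument, on the other hand, is the one that scales to the deformed structures $J_t$ treated later, where the analogous brackets are no longer so clean and the authors must go through the elliptic operator $V_2^t\overline{V}_2^t + V_3^t\overline{V}_3^t$ before running precisely your concluding Fourier computation. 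One cosmetic slip worth fixing: in arguing that $d(\Phi^1\wedge\Phi^2\wedge\Phi^3)$ has no $(3,1)$-part, you say the surviving terms carry one barred factor; in fact they carry two (they involve $\Phi^{\bar{1}\bar{2}}$ and $\Phi^{\bar{1}\bar{3}}$, the one-bar terms $\Phi^{1\bar{2}}$, $\Phi^{1\bar{3}}$ being exactly those killed by $\Phi^1\wedge\Phi^1=0$), which is precisely why the form is of pure type $(2,2)$ --- the conclusion you use is correct.
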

\begin{proof}
Let $f = u + iv$, where $u: \IR^6 \to \IR$ and $v: \IR^6 \to \IR$ are smooth and $\Gamma$-periodic. Then, since $\overline{\partial} \pa{\Phi^1 \wedge \Phi^2 \wedge \Phi^3} = 0$, we have that $\overline{\partial}\sigma = 0$ if and only if $\overline{\partial}f = 0$. This turns to be equivalent to the following PDEs system
\[\left\{
\begin{array}{l}
u_s - v_x = 0,\\[5pt]
u_x + v_s = 0,\\[5pt]
e^{-s} u_{y^1} - e^s v_{y^2} = 0,\\[5pt]
e^{s} u_{y^2} + e^{-s} v_{y^1} = 0,\\[5pt]
e^{-s} u_{y^3} - e^s v_{y^4} = 0,\\[5pt]
e^{s} u_{y^4} + e^{-s} v_{y^3} = 0.
\end{array} \right.\]
The first two equations imply that $f = f(y^1, y^2, y^3, y^4)$, since $N$ is compact. The other equations imply that $f = \const$ since they form an elliptic PDE system.
\end{proof}

Therefore,
\[P_1(N, J) = 1.\]
Similar computations give
\[P_m(N, J) = 1.\]
Indeed, it is easy to see by induction that $\delbar((\Phi^1 \wedge \Phi^2 \wedge \Phi^3)^{\otimes k}) = 0$ for every $k \geq 1$, so the condition $\delbar \pa{f \cdot (\Phi^1 \wedge \Phi^2 \wedge \Phi^3)^{\otimes k}}$ is again equivalent to $\delbar f = 0$.

\begin{cor}
Let $N$ be a Nakamura manifold of completely solvable type endowed with the (non-integrable) almost complex structure $J$. Then
\[\kappa^J(N) = 0.\]
\end{cor}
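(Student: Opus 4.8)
The plan is to read off the Kodaira dimension directly from the values of the plurigenera, which the preceding computation has essentially already determined. The key input is that $P_m(N, J) = 1$ for \emph{every} $m \geq 1$, and once this is in hand the corollary reduces to evaluating the $\limsup$ in the definition \eqref{kodaira-dimension}.

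First I would establish the claim $P_m(N, J) = 1$ for all $m$. Any section of $\cK_Y^{\otimes m}$ can be written as $f \cdot (\Phi^1 \wedge \Phi^2 \wedge \Phi^3)^{\otimes m}$ for a smooth complex function $f$ on $N$. By the Leibniz rule for $\delbar_J$ together with the inductive fact $\delbar_J\bigl((\Phi^1 \wedge \Phi^2 \wedge \Phi^3)^{\otimes m}\bigr) = 0$ (which follows from the base case $\delbar_J(\Phi^1 \wedge \Phi^2 \wedge \Phi^3) = 0$ used in the Lemma), the pseudoholomorphy condition $\delbar_J\bigl(f \cdot (\Phi^1 \wedge \Phi^2 \wedge \Phi^3)^{\otimes m}\bigr) = 0$ collapses to $\delbar_J f = 0$. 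This is exactly the same PDE system treated in the Lemma, whose ellipticity forces $f$ to be constant on the compact manifold $N$. Hence $H^0(N, \cK_Y^{\otimes m})$ is one-dimensional, spanned by $(\Phi^1 \wedge \Phi^2 \wedge \Phi^3)^{\otimes m}$, so $P_m(N, J) = 1$.

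With this, the computation is immediate: since $P_m(N, J) = 1 \neq 0$ for every $m \geq 1$, we fall into the second branch of the definition of Kodaira dimension, and
\[\kappa^J(N) = \limsup_{m \to +\infty} \frac{\log P_m(N, J)}{\log m} = \limsup_{m \to +\infty} \frac{\log 1}{\log m} = 0.\]

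I do not expect any genuine obstacle here; the only step requiring care is the reduction of pseudoholomorphy to $\delbar_J f = 0$ in each tensor power, and this is a routine consequence of the Leibniz rule once the reference section is known to be $\delbar_J$-closed. Everything else is a direct substitution into \eqref{kodaira-dimension}.
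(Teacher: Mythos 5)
Your proposal is correct and follows exactly the paper's own route: reduce pseudoholomorphy of $f \cdot (\Phi^1 \wedge \Phi^2 \wedge \Phi^3)^{\otimes m}$ to $\delbar_J f = 0$ via the inductive $\delbar_J$-closedness of the reference section, invoke the ellipticity argument of the preceding Lemma to force $f$ constant, and then evaluate $\limsup_{m} \log P_m / \log m = 0$ from the definition. No gaps, and nothing to add.
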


\subsection{Kodaira dimension of a deformation of Nakamura manifolds}\label{sect: kod dim of nakamura def}

In this section we will show that the Kodaira dimension is unstable under almost K\"ahler deformations (cf. \cite[Theorem 2]{Nak}). First of all, the following defines a symplectic structure on $N$
\begin{equation}\label{symplectic-Nakamura}
\omega := E^{12}+ E^{34}+ E^{56},
\end{equation}
and $g(\blank, \blank) = \omega(\blank, J\blank)$ gives rise to an almost K\"ahler structure on $N$.

Let $t = (t_1, t_2, t_3, t_4) \in \IR^4$, $\abs{t}^2 < \varepsilon$ and $L_t \in (\End(T N))$ be the endomorphism given by
\[\left\{ \begin{array}{lll}
L_t(E_1) = -t_1 E_1 - t_2 E_2,&
L_t(E_2) = -t_2 E_1 + t_1 E_2,&
L_t(E_3) = E_3,\\
L_t(E_4) = E_4,&
L_t(E_5) = -t_3 E_5 - t_4 E_6,&
L_t(E_6) = -t_4 E_5 + t_3 E_6.
\end{array} \right.\]
Set 
\[J_t = (I + L_t) J (I + L_t)^{-1}.\] 
Then, by a direct computation, we can show the following

\begin{lemma}\label{almost complex-deformation}
The set $\set{J_t}_{t}$ is a family of $\omega$-compatible almost complex structures on $N$ such that $J_0 = J$. Furthermore, setting
\[\left\{ \begin{array}{l}
\alpha(t):= \frac{2t_2}{t_1^2 + t_2^2 - 1},\\
\beta(t) := \frac{(1 - t_1)^2 + t_2^2}{t_1^2 + t_2^2 - 1},\\
\gamma(t):= -\frac{(1 + t_1)^2 + t_2^2}{t_1^2 + t_2^2 - 1},\\
\delta(t):= \frac{2t_4}{t_3^2 + t_4^2 - 1},\\
\lambda(t) := \frac{(1 - t_3)^2 + t_4^2}{t_3^2 + t_4^2 - 1},\\
\mu(t):= -\frac{(1 + t_3)^2 + t_4^2}{t_3^2 + t_4^2 - 1},
\end{array} \right.\]
a $(1,0)$-coframe for $(N, J_t)$ is given by
\[\left\{ \begin{array}{l}
\Phi^1_{t} = E^1 - \ii(\alpha(t) E^1 + \beta(t) E^2),\\
\Phi^2_{t} = E^3 + \ii E^4,\\
\Phi^3_{t} = E^5 - \ii(\delta(t) E^5 + \lambda(t) E^6).
\end{array} \right.\] 
\end{lemma}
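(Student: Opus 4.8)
The plan is to reduce everything to the original structure $J$ by exploiting the fact that $J_t$ is conjugate to $J$. Writing $B = I + L_t$, note first that $B$ is invertible for $\abs{t}^2 < \varepsilon$ small: it is block diagonal in the three pairs $\set{E_1, E_2}$, $\set{E_3, E_4}$, $\set{E_5, E_6}$, with block determinants $1 - t_1^2 - t_2^2$, $4$, and $1 - t_3^2 - t_4^2$, all nonzero near the origin. Hence $J_t^2 = B J^2 B^{-1} = -\mathrm{id}$, so each $J_t$ is genuinely an almost complex structure and no separate argument is needed for this. To get $J_0 = J$ I would only observe that $I + L_0 = \mathrm{diag}(1,1,2,2,1,1)$ is scalar on the block $\set{E_3, E_4}$ (which $J$ preserves) and the identity on the other two blocks, so it commutes with $J$ and the conjugation is trivial.

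The heart of the proof is a single observation that produces the coframe almost for free. The induced action on $1$-forms satisfies $J_t^* = (B^*)^{-1} J^* B^*$, so a $1$-form $\phi$ is of type $(1,0)$ for $J_t$ (that is, $J_t^* \phi = \ii \phi$) if and only if $B^* \phi$ is of type $(1,0)$ for $J$, i.e. $B^* \phi \in \Span_\IC \set{\Phi^1, \Phi^2, \Phi^3}$. Since $B$ is symmetric and block diagonal, $B^*$ acts on the coframe $\set{E^i}$ with exactly the coefficients with which $L_t$ acts on $\set{E_i}$, and the three blocks decouple. On the middle block $B^*$ is multiplication by $2$, so $B^* \Phi^2 = 2 \Phi^2$ and the $(1,0)$-form is unchanged, giving $\Phi^2_t = \Phi^2 = E^3 + \ii E^4$. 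On the first block I would set $\Phi^1_t = (1 - \ii \alpha) E^1 - \ii \beta E^2$ and impose $B^* \Phi^1_t \in \Span_\IC \set{E^1 + \ii E^2}$; matching the $E^1$- and $E^2$-coefficients yields two real equations, and a direct check — using $2(1-t_1) - (1-t_1)^2 - t_2^2 = -(t_1^2 + t_2^2 - 1)$ for the real part and the analogous cubic identity for the imaginary part — shows these hold precisely for the stated $\alpha(t), \beta(t)$. The third block is identical with $(t_3, t_4)$ in place of $(t_1, t_2)$, producing $\Phi^3_t$ with $\delta(t), \lambda(t)$. Living in distinct blocks, the three forms are automatically $\IC$-linearly independent, and as $\dim_\IC \Lambda^{1,0}_{J_t} = 3$ they form a $(1,0)$-coframe.

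For $\omega$-compatibility I would reuse the coframe just obtained. Inverting the block relations expresses each $E^{2k-1}$ and $E^{2k}$ as a $\IC$-combination of $\Phi^k_t$ and $\overline{\Phi^k_t}$ only, so each summand $E^{2k-1} \wedge E^{2k}$ of $\omega$ collapses to a multiple of $\Phi^k_t \wedge \overline{\Phi^k_t}$ (the pure $\Phi^k_t \wedge \Phi^k_t$ term dies). Thus $\omega = E^{12} + E^{34} + E^{56}$ is of type $(1,1)$ for $J_t$, which is exactly $\omega(J_t X, J_t Y) = \omega(X, Y)$. The symmetric form $g(\cdot, J_t \cdot)$ is positive definite at $t = 0$, hence remains so for $\abs{t}^2 < \varepsilon$ after possibly shrinking $\varepsilon$, which is what makes $(N, \omega, J_t)$ almost K\"ahler.

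The only genuinely laborious step is the block-algebra verifying that the explicit rational functions $\alpha, \beta$ (and $\delta, \lambda$) solve the eigen-form equations; everything else is formal. I also note that $\gamma(t)$ and $\mu(t)$ play no role in this statement — they record the normalization of the conjugate forms and are used later — so they need no attention here.
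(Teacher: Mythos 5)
Your proof is correct. The paper offers no written argument for this Lemma---it is prefaced only by ``by a direct computation, we can show the following''---and the matrix \eqref{eq: ac structure nakamura def} displayed immediately afterwards indicates what that computation was meant to be: conjugate $J$ by $B = I + L_t$ block by block to obtain the matrix of $J_t$ with entries $\alpha, \beta, \gamma$ (resp.\ $\delta, \lambda, \mu$), and then verify that the stated forms are $\ii$-eigenvectors of $J_t^*$, a check which uses the relation $-\alpha^2 - \beta\gamma = -\delta^2 - \lambda\mu = 1$. Your route is genuinely different in organization: by transferring the eigenform condition through the conjugation (namely $J_t^*\phi = \ii \phi$ precisely when $J^*(B^*\phi) = \ii\, B^*\phi$), you never compute the matrix of $J_t$ at all, so $\gamma(t)$, $\mu(t)$ and the quadratic relation never enter; what remains is the pair of real identities per block that pin down $\alpha, \beta$ (resp.\ $\delta, \lambda$), and these do hold for the stated rational functions. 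Your compatibility and positivity arguments (each $E^{2k-1} \wedge E^{2k}$ collapses onto a multiple of $\Phi^k_t \wedge \overline{\Phi^k_t}$, so $\omega$ is of type $(1,1)$ for $J_t$, hence $J_t$-invariant; and $g_t = \omega(\blankSpace, J_t \blankSpace)$ stays positive definite for small $\abs{t}$ by openness) are sound, and indeed record more than the paper does. The trade-off: your proof is leaner for this Lemma, while the paper's explicit matrix of $J_t$ is recycled in Section \ref{sect: curv of nakamura def} for the curvature computations, so the brute-force conjugation is not wasted work downstream. Two cosmetic points: your phrase that $B^*$ acts on the coframe ``with exactly the coefficients with which $L_t$ acts on $\set{E_i}$'' should refer to $I + L_t$ rather than $L_t$ (the substance is that the matrix of $B$ is symmetric, so $B^*$ has the same matrix in the dual coframe); and the block inversion in the compatibility step tacitly uses $\beta(t) \neq 0$ and $\lambda(t) \neq 0$, which holds near $t = 0$ since $\beta(0) = \lambda(0) = -1$ and deserves a line.
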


With these notations, we have in fact
\begin{equation}\label{eq: ac structure nakamura def}
J_t = \left( \begin{array}{cccccc}
\alpha(t) & \beta(t) & 0 & 0 & 0 & 0\\
\gamma(t) & -\alpha(t) & 0 & 0 & 0 & 0\\
0 & 0 & 0 & -1 & 0 & 0\\
0 & 0 & 1 & 0 & 0 & 0\\
0 & 0 & 0 & 0 & \delta(t) & \lambda(t)\\
0 & 0 & 0 & 0 & \mu(t) & -\delta(t)
\end{array} \right),
\end{equation}
and we get also the relation $-\alpha^2 - \beta \gamma = -\delta^2 - \lambda \mu = 1$.

\begin{lemma}\label{structure-deformation}
We have the following equalities:
\[\delbar \Phi^1_t = 0, \quad \delbar \Phi^2_t = \frac{1}{2} \Phi_t^{1\overline{2}}, \quad \delbar \Phi^3_t = \frac{1}{2} \left[ (1 - \ii\delta(t)) \Phi_t^{1\overline{3}} - \ii \delta(t) \Phi_t^{\overline{1}3} \right].\]
\end{lemma}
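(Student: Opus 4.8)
The plan is to compute $\delbar \Phi^j_t$ as the type-$(1,1)$ component of $d\Phi^j_t$, which is exactly what the decomposition $d = A_J + \del_J + \delbar_J + \overline{A}_J$ prescribes for a $(1,0)$-form (recall $\delbar_J = \pi^{1,1}\circ d$ in this case). The decisive simplification is that the coefficients $\alpha(t), \beta(t), \delta(t), \lambda(t)$ are constant on $N$, depending only on the parameter $t$; hence $d$ acts only on the real coframe $\set{E^i}$ through the structure equations \eqref{realstructureequation}.

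First I would dispose of the two easy cases. Since $\Phi^1_t = (1 - \ii\alpha(t))E^1 - \ii\beta(t)E^2$ is a constant-coefficient combination of $E^1$ and $E^2$, and $dE^1 = dE^2 = 0$, we get $d\Phi^1_t = 0$, so $\delbar \Phi^1_t = 0$. For $\Phi^2_t = E^3 + \ii E^4$ the structure equations give $dE^3 = E^{13}$ and $dE^4 = -E^{14}$, whence $d\Phi^2_t = E^1 \wedge (E^3 - \ii E^4) = E^1 \wedge \overline{\Phi^2_t}$. The next step is to re-express the real forms in the complex coframe $\set{\Phi^j_t, \overline{\Phi^j_t}}$: from $\Phi^1_t + \overline{\Phi^1_t} = 2E^1$ one obtains $E^1 = \tfrac12(\Phi^1_t + \overline{\Phi^1_t})$, so that $d\Phi^2_t = \tfrac12 \Phi_t^{1\overline{2}} + \tfrac12 \overline{\Phi^1_t}\wedge\overline{\Phi^2_t}$. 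The first summand has type $(1,1)$ and the second type $(0,2)$, giving $\delbar \Phi^2_t = \tfrac12 \Phi_t^{1\overline{2}}$.

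The only genuinely computational point is $\Phi^3_t$. Writing $\Phi^3_t = (1 - \ii\delta(t))E^5 - \ii\lambda(t) E^6$ and using $dE^5 = E^{15}$, $dE^6 = -E^{16}$, one finds $d\Phi^3_t = E^1 \wedge \big[(1-\ii\delta)E^5 + \ii\lambda E^6\big]$. Here the bracketed $1$-form is \emph{not} $\overline{\Phi^3_t}$, so I would solve the $2\times 2$ linear system expressing it in terms of $\Phi^3_t$ and $\overline{\Phi^3_t}$; the solution is $(1-\ii\delta)E^5 + \ii\lambda E^6 = -\ii\delta\,\Phi^3_t + (1-\ii\delta)\overline{\Phi^3_t}$. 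Substituting $E^1 = \tfrac12(\Phi^1_t + \overline{\Phi^1_t})$, expanding, and retaining only the type-$(1,1)$ terms (namely $\Phi^1_t \wedge \overline{\Phi^3_t} = \Phi_t^{1\overline{3}}$ and $\overline{\Phi^1_t}\wedge\Phi^3_t = \Phi_t^{\overline{1}3}$, while $\Phi^1_t\wedge\Phi^3_t$ is of type $(2,0)$ and $\overline{\Phi^1_t}\wedge\overline{\Phi^3_t}$ of type $(0,2)$) yields the stated expression for $\delbar\Phi^3_t$.

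The main obstacle, such as it is, lies entirely in this last inversion: because the combination arising from $d\Phi^3_t$ is not simply the conjugate of $\Phi^3_t$, the coefficients of $\Phi^3_t$ and $\overline{\Phi^3_t}$ come out asymmetric, and it is precisely this asymmetry that produces the two distinct coefficients $(1 - \ii\delta(t))$ and $-\ii\delta(t)$ in the final formula. Everything else is routine bilinear bookkeeping, and the constancy of the structure coefficients is what prevents any extra terms from appearing.
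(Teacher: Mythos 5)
Your proposal is correct and is precisely the direct computation the paper leaves implicit: expand each $\Phi^j_t$ in the real coframe with coefficients constant on $N$, apply the structure equations \eqref{realstructureequation}, rewrite the result in the complex coframe via $E^1 = \tfrac{1}{2}(\Phi^1_t + \overline{\Phi^1_t})$, and extract the $(1,1)$-component. The only delicate step, the inversion $(1-\ii\delta)E^5 + \ii\lambda E^6 = -\ii\delta\,\Phi^3_t + (1-\ii\delta)\overline{\Phi^3_t}$, is carried out correctly and yields exactly the asymmetric coefficients in the stated formula for $\delbar\Phi^3_t$.
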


By Lemma \ref{almost complex-deformation}, we easily obtain the dual frame $\set{V_1^t, V_2^t, V_3^t}$ of global $(1,0)$-vector fields on $(N, J_t)$:
\[\left\{ \begin{array}{l}
V_1^t = \frac{1}{2} \left[ (E_1 - \frac{\alpha}{\beta}(t) E_2) + \frac{\ii}{\beta(t)} E_2 \right],\\[3pt]
V_2^t = \frac{1}{2} (E^3 - \ii E_4),\\[3pt]
V_3^t = \frac{1}{2} \left[ (E_5 - \frac{\delta}{\lambda}(t) E_6) + \frac{\ii}{\lambda(t)} E_6 \right].\vspace{1mm}
\end{array} \right.\]
More explicitly,
\begin{equation}\label{deformation-complexvector}
\left\{ \begin{array}{l}
V_1^t = \frac{1}{2} \left[ (\frac{\partial}{\partial s} - \frac{\alpha}{\beta}(t) \frac{\partial}{\partial x}) + \frac{\ii}{\beta(t)} \frac{\partial}{\partial x} \right],\\[3pt]
V_2^t = \frac{1}{2} (e^{-s} \frac{\partial}{\partial y_1} - \ii e^s \frac{\partial}{\partial y_2}),\\[3pt]
V_3^t = \frac{1}{2} \left[ (e^{-s} \frac{\partial}{\partial y_3} - \frac{\delta}{\lambda}(t) e^s \frac{\partial}{\partial y_4}) + \frac{\ii}{\lambda(t)} e^s \frac{\partial}{\partial y_4} \right]. \vspace{1mm}
\end{array} \right.
\end{equation}
Let now $\sigma = f \Phi_{t}^{123}$. Then $\delbar \sigma = 0$ if and only if 
\[\delbar f \wedge \Phi_t^{123} - \ii \delta(t) f \Phi_t^{\bar{1}123} = 0,\]
which turns to be equivalent to the system
\begin{equation}\label{eq: pde system}
\left\{ \begin{array}{l}
\bar{V}_1^t f - \ii \delta(t) f = 0,\\
\bar{V}_2^t f = 0,\\
\bar{V}_3^t f = 0.
\end{array} \right.
\end{equation}
By the second and third equation in \eqref{eq: pde system}, we obtain
\[(V_2^t \bar{V}_2^t + V_3^t \bar{V}_3^t) f = 0.\]
As $V_2^t \bar{V}_2^t + V_3^t \bar{V}_3^t$ is a real operator, setting $f = u + \ii v$, the last complex equation is equivalent to the following two real equations,
\begin{equation}
\left\{ \begin{array}{l}
\pa{V_2^t \bar{V}_2^t + V_3^t \bar{V}_3^t} u = 0\\
\pa{V_2^t \bar{V}_2^t + V_3^t \bar{V}_3^t} v = 0.
\end{array} \right.
\end{equation}
By using \eqref{deformation-complexvector}, a direct computation shows that the second order differential operator $4 (V_2^t \bar{V}_2^t + V_3^t \bar{V}_3^t)$ is given by
\[4 (V_2^t \bar{V}_2^t + V_3^t \bar{V}_3^t) = e^{-2s} \frac{\partial^2}{\partial y_1^2} + e^{2s} \frac{\partial^2}{\partial y_2^2} + e^{-2s} \frac{\partial^2}{\partial y_3^2} + \frac{(1 + \delta^2(t))}{\lambda^2(t)} e^{2s} \frac{\partial^2}{\partial y_4^2} - 2 \frac{\delta(t)}{\lambda(t)} \frac{\partial^2}{\partial y_3 \partial y_4}\]
and one can check that it is elliptic. Consequently, $u = u(s, x)$, $v = v(s, x)$, that is $f = f(s, x)$.

The first equation in \eqref{eq: pde system} is then equivalent to the system
\begin{equation}\label{non-homogeneous-system}
\left\{ \begin{array}{l}
2 \delta u = \frac{\partial v}{\partial s} - \frac{\alpha}{\beta} \frac{\partial v}{\partial x} - \frac{1}{\beta} \frac{\partial u}{\partial x}\\
2 \delta v = \frac{\partial u}{\partial s} - \frac{\alpha}{\beta} \frac{\partial u}{\partial x} + \frac{1}{\beta} \frac{\partial v}{\partial x}.
\end{array} \right.
\end{equation}

To resolve system \eqref{non-homogeneous-system}, we begin by observing that it is equivalent to
\[\left\{ \begin{array}{l}
\frac{1}{\beta} \frac{\partial u}{\partial x} = \frac{\partial v}{\partial s} - \frac{\alpha}{\beta} \frac{\partial v}{\partial x} - 2 \delta u\\
\frac{\partial u}{\partial s} = \alpha \pa{\frac{\partial v}{\partial s} - \frac{\alpha}{\beta} \frac{\partial v}{\partial x} - 2 \delta u} - \frac{1}{\beta} \frac{\partial v}{\partial x} + 2 \delta v,
\end{array} \right.\]
hence to
\[\left\{ \begin{array}{l}
\frac{\partial u}{\partial s} = \alpha \frac{\partial v}{\partial s} + \gamma \frac{\partial v}{\partial x} - 2 \alpha \delta u + 2 \delta v\\
\frac{\partial u}{\partial x} = \beta \frac{\partial v}{\partial s} - \alpha \frac{\partial v}{\partial x} - 2 \beta \delta u.
\end{array} \right.\]
Taking the derivative with respect to $x$ of the first equation, and with respect to $s$ of the second one, we can then see that the following relation holds:
\[\pa{\beta \frac{\partial^2}{\partial s^2} - 2 \alpha \frac{\partial^2}{\partial s \partial x} - \gamma \frac{\partial^2}{\partial x^2}} v = 2 \beta \delta \frac{\partial u}{\partial s} - 2 \alpha \delta \frac{\partial u}{\partial x} + 2 \delta \frac{\partial v}{\partial x} = 4 \beta \delta^2 v.\]
Observe that the operator in the left term is elliptic.

So, if $\delta(t) = 0$ (i.e., for $t = (t_1, t_2, t_3, 0)$), we obtain that $v$ must be constant, which forces $u$ to be also constant. This shows that $P_1(M, J_t) = 1$ for $t = (t_1, t_2, t_3, 0)$. We observe that a similar computation shows that $u$ also satisfies
\begin{equation}\label{diff-eq}
\pa{\beta \frac{\partial^2}{\partial s^2} - 2 \alpha \frac{\partial^2}{\partial s \partial x} - \gamma \frac{\partial^2}{\partial x^2}} u = 4 \beta \delta^2 u.
\end{equation}
Since we are looking for periodic solutions of \eqref{diff-eq}, we can work with Fourier series and assume that the solution $u$ is of the form
\[u(s, x) = \sum_{n, m \in \IZ} A_{nm} e^{2\pi \ii \pa{nx + \frac{m}{\zeta} s}}.\]
Assume that $u$ is such a solution, with $A_{nm} \neq 0$ for some pair $(n, m)$: we deduce that the relation
\begin{equation}\label{eq: rel}
\frac{\beta^2}{\zeta^2} m^2 - 2 \frac{\alpha}{\zeta} nm - \gamma n^2 = -\frac{\beta \delta^2}{\pi^2}
\end{equation}
holds, and since $\beta(0) = -1$ we can see this as an equation of degree $2$ in the unknown $m$. The `key observation' is that the discriminant of \eqref{eq: rel}, which is $- \frac{4}{\zeta^2}\pa{n^2 + \frac{\beta^2 \delta^2}{\pi^2}}$, must be non-negative as we are assuming $u$ to be a solution, which forces $n = \beta \delta = 0$. As $\beta(0) = -1$ (and so $\beta(t) \neq 0$ for $\abs{t} < \varepsilon$), the last relation reduces to $\delta = 0$. In particular, this shows that if $\delta(t) \neq 0$, then the only solution to \eqref{diff-eq} and to \eqref{non-homogeneous-system} is the trivial one. Assuming instead that $n = \delta = 0$, relation \eqref{eq: rel} implies that $m = 0$: this means that a non-trivial solution for \eqref{diff-eq} must be constant.

We have then shown that
\[P_1(N, J) = \left\{ \begin{array}{ll}
1 & \text{if } \delta(t) = 0\\
0 & \text{if } \delta(t) \neq 0.
\end{array} \right.\]

Finally, one can prove by induction that $\delbar\pa{(\Phi_t^{123})^{\otimes m}} = -m \frac{\ii \delta}{2} \Phi_t^{\bar{1}} \wedge (\Phi_t^{123})^{\otimes m}$, hence it follows that $\delbar(f (\Phi_t^{123})^{\otimes m}) = 0$ if and only if
\[\bar{V}_1^t f - m \frac{\ii \delta}{2} f = 0, \qquad \bar{V}_2^t f = \bar{V}_3^t f = 0,\]
so the same methods apply also for pluricanonical differentials (just replace $\delta$ with $m \delta$).

\begin{thm}\label{thm: kod dim}
Let $Y_t = (N, g_t, J_t)$ be the almost K\"ahler family of deformations the Nakamura manifold defined above, where $g_t(\blank, \blank) = \omega(\blank, J_t \blank)$. Take any $t = (t_1, t_2, t_3, t_4) \in \IR^4$, $\abs{t}^2 < \varepsilon$. Then
\[\kappa^{J_t}(N) = \left\{ \begin{array}{ll}
0 & \text{if } t_4 = 0,\\
-\infty & \text{if } t_4 \neq 0.
\end{array} \right.\]
\end{thm}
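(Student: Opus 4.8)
The plan is to assemble the ingredients already prepared in the discussion preceding the statement, since the genuine analytic work has been carried out there; what remains is to translate the vanishing of $\delta(t)$ into a condition on $t_4$ and to propagate the computation of $P_1$ to every $m$. The key preliminary observation is that, on the region $\abs{t}^2 < \varepsilon$, one has $\delta(t) = 0$ if and only if $t_4 = 0$: this is immediate from $\delta(t) = \frac{2t_4}{t_3^2 + t_4^2 - 1}$, whose denominator stays near $-1$ and hence never vanishes, so the vanishing of $\delta(t)$ is governed entirely by its numerator. This identifies the two cases of the theorem with the two regimes $\delta = 0$ and $\delta \neq 0$ already analyzed.

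In the regime $t_4 = 0$ (so $\delta = 0$), I would combine the computation already showing $P_1(N, J_t) = 1$ with the closing remark that the $m$-th pluricanonical problem is obtained by replacing $\delta$ with $m\delta$ (here $m$ denoting the pluricanonical power). Since $m\delta = 0$ as well, the elliptic equation \eqref{diff-eq}, together with its analogue for the imaginary part, forces both components of $f$ to be constant, while conversely every constant solves \eqref{non-homogeneous-system}. Hence $P_m(N, J_t) = 1$ for every $m \geq 1$, and therefore
\[\kappa^{J_t}(N) = \limsup_{m \to \infty} \frac{\log P_m(N, J_t)}{\log m} = \limsup_{m \to \infty} \frac{\log 1}{\log m} = 0.\]

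In the complementary regime $t_4 \neq 0$ (so $\delta \neq 0$), the decisive point is that $m\delta \neq 0$ for every pluricanonical power $m \geq 1$. Running the discriminant argument following \eqref{eq: rel} with $\delta$ replaced by $m\delta$, and regarding the relation as a quadratic in the $s$-frequency, its discriminant equals $-\frac{4}{\zeta^2}\bigl(n^2 + \frac{\beta^2 m^2 \delta^2}{\pi^2}\bigr)$. As $\beta(0) = -1$ guarantees $\beta(t) \neq 0$ for $\abs{t} < \varepsilon$, this discriminant is strictly negative for every integer frequency $n$; thus \eqref{eq: rel} has no integer solution, every Fourier coefficient vanishes, and the only solution is $f \equiv 0$. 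Consequently $P_m(N, J_t) = 0$ for all $m \geq 1$, and $\kappa^{J_t}(N) = -\infty$ by definition.

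I do not expect a genuine obstacle, since the hard analysis---ellipticity of $4(V_2^t \bar{V}_2^t + V_3^t \bar{V}_3^t)$, the reduction to $f = f(s,x)$, and the Fourier-series discriminant computation---is already in place. The only point meriting a word of care is the uniformity in $m$: one must confirm that inserting the factor $m$ neither destroys ellipticity nor alters the sign of the discriminant. This is clear, because $m$ enters every relevant equation only through the scalar $m\delta$, and all the conclusions hinge solely on whether this scalar vanishes.
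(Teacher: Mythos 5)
Your proposal is correct and follows exactly the route the paper takes: the theorem is precisely the assembly of the preceding discussion, namely the observation that $\delta(t)=0$ if and only if $t_4=0$ (since the denominator of $\delta$ stays near $-1$ for $\abs{t}^2<\varepsilon$), the computation giving $P_1 = 1$ or $0$ according to whether $\delta$ vanishes, and the replacement $\delta \mapsto m\delta$ for the pluricanonical case, which leaves both the ellipticity and the sign of the discriminant argument untouched. Your explicit attention to the clash between $m$ as pluricanonical power and $m$ as Fourier frequency is a welcome clarification, but the substance is the same as the paper's.
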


\subsection{Ricci and scalar curvature of the deformed Nakamura manifold}\label{sect: curv of nakamura def}

In this section we consider the almost complex manifolds $(N, J_t)$ where $J_t$ is given by \eqref{eq: ac structure nakamura def}. With the notations introduced in \eqref{coframe-Nakamura}, let us consider the real $(1, 1)$-form
\[\omega = E^1 \wedge E^2 + E^3 \wedge E^4 + E^5 \wedge E^6:\]
it is then easy to observe that
\[d\omega = 0, \qquad \omega(J_t \blank, J_t \blank) = \omega(\blank, \blank).\]
As a consequence, we can endow $(N, J_t)$ with the structure of an almost K\"ahler manifold once we consider the Riemannian metric $g_t$ given as $g_t(\blank, \blank) = \omega(\blank, J_t \blank)$.

A $g_t$-orthonormal frame for $Y_t = (N, g_t, J_t)$ is then provided by
\[\begin{array}{lll}
E_1' = \frac{1}{\sqrt{\gamma}} E_1, & E_3' = E_3, & E_5' = \frac{1}{\sqrt{\mu}} E_5,\\
E_2' = \frac{\alpha}{\sqrt{\gamma}} E_1 + \sqrt{\gamma} E_2, & E_4' = E_4, & E_6' = \frac{\delta}{\sqrt{\mu}} E_5 + \sqrt{\mu} E_6,
\end{array}\]
and with respect to this frame the almost complex structure $J_t$ takes the standard form:
\[\begin{array}{lll}
J E_1' = E_2' & J E_3' = E_4', & J E_5' = E_6',\\
J E_2' = -E_1', & J E_4' = -E_3', & J E_6' = -E_5'.
\end{array}\]
We can then introduce the following complex frame, which is $h_t$-unitary on $T^{1, 0} N$, where $h_t$ denotes the Hermitian extension of $g_t$ to $T_\IC N$:
\[\begin{array}{ll}
z_1 = \frac{\sqrt{2}}{2} \pa{E_1' - \ii E_2'}, & \bar{z}_1 = \frac{\sqrt{2}}{2} \pa{E_1' + \ii E_2'},\\
z_2 = \frac{\sqrt{2}}{2} \pa{E_3' - \ii E_4'}, & \bar{z}_2 = \frac{\sqrt{2}}{2} \pa{E_3' + \ii E_4'},\\
z_3 = \frac{\sqrt{2}}{2} \pa{E_5' - \ii E_6'}, & \bar{z}_3 = \frac{\sqrt{2}}{2} \pa{E_5' + \ii E_6'}.
\end{array}\]

Dually, we can define the coframe
\[\begin{array}{lll}
{E^1}' = \sqrt{\gamma} E^1 - \frac{\alpha}{\sqrt{\gamma}} E^2, & {E^3}' = E^3, & {E^5}' = \sqrt{\mu} E^5 - \frac{\delta}{\sqrt{\mu}} E^6,\\
{E^2}' = \frac{1}{\sqrt{\gamma}} E^2, & {E^4}' = E^4, & {E^6}' = \frac{1}{\sqrt{\mu}} E^6,
\end{array}\]
to which corresponds the complex coframe
\[\begin{array}{ll}
\Phi^1 = \frac{\sqrt{2}}{2} \pa{{E^1}' + \ii {E^2}'}, & \bar{\Phi}^1 = \frac{\sqrt{2}}{2} \pa{{E^1}' - \ii {E^2}'},\\
\Phi^2 = \frac{\sqrt{2}}{2} \pa{{E^3}' + \ii {E^4}'}, & \bar{\Phi}^2 = \frac{\sqrt{2}}{2} \pa{{E^3}' - \ii {E^4}'},\\
\Phi^3 = \frac{\sqrt{2}}{2} \pa{{E^5}' + \ii {E^6}'}, & \bar{\Phi}^3 = \frac{\sqrt{2}}{2} \pa{{E^5}' - \ii {E^6}'}.
\end{array}\]

\begin{lemma}
The (real) torsion forms for the canonical connection $\ccon$ on $Y_t$ are
\[\begin{array}{l}
\Theta^1 = 0,\\
\Theta^2 = 0,\\
\Theta^3 = \frac{1}{2\sqrt{\gamma}} {E'}^{13} + \frac{\alpha}{2\sqrt{\gamma}} {E'}^{14} + \frac{\alpha}{2\sqrt{\gamma}} {E'}^{23} - \frac{1}{2\sqrt{\gamma}} {E'}^{24},\\
\Theta^4 = \frac{\alpha}{2\sqrt{\gamma}} {E'}^{13} - \frac{1}{2\sqrt{\gamma}} {E'}^{14} - \frac{1}{2\sqrt{\gamma}} {E'}^{23} - \frac{\alpha}{2\sqrt{\gamma}} {E'}^{24},\\
\Theta^5 = \frac{1}{2\sqrt{\gamma}}(1 - \alpha \delta) {E'}^{15} + \frac{1}{2\sqrt{\gamma}}(\alpha + \delta) {E'}^{16} + \frac{1}{2\sqrt{\gamma}}(\alpha + \delta) {E'}^{25} - \frac{1}{2\sqrt{\gamma}}(1 - \alpha \delta) {E'}^{26},\\
\Theta^6 = \frac{1}{2\sqrt{\gamma}}(\alpha + \delta) {E'}^{15} - \frac{1}{2\sqrt{\gamma}}(1 - \alpha \delta) {E'}^{16} - \frac{1}{2\sqrt{\gamma}}(1 - \alpha \delta) {E'}^{25} - \frac{1}{2\sqrt{\gamma}}(\alpha + \delta) {E'}^{26},
\end{array}\]
where ${E'}^{ij}$ stands for ${E^i}' \wedge {E^j}'$ and so on.
\end{lemma}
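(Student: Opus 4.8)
The plan is to reduce everything to the Nijenhuis tensor of $J_t$, just as in the Kodaira--Thurston case. Since $Y_t$ is almost K\"ahler, hence quasi K\"ahler, Corollary \ref{cor: T = 1/4 N} applies and gives $T^{\ccon}(X, Y) = \frac{1}{4} N_{J_t}(X, Y)$. By definition the real torsion forms are determined by $T^{\ccon}(X, Y) = \sum_i \Theta^i(X, Y)\, E_i'$, so it is enough to evaluate $N_{J_t}$ on pairs of the orthonormal frame $\set{E_i'}$: then $\Theta^i(E_a', E_b')$ is one quarter of the $E_i'$-component of $N_{J_t}(E_a', E_b')$.

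First I would record the Lie brackets of the orthonormal frame. The scalars $\alpha, \gamma, \delta, \mu$ are constant on $N$ (they depend only on $t$), so the brackets of the $E_i'$ follow by bilinearity from those of the $E_i$, which are read directly off the structure equations \eqref{realstructureequation}: the only nonzero ones are $[E_1, E_3] = -E_3$, $[E_1, E_4] = E_4$, $[E_1, E_5] = -E_5$, $[E_1, E_6] = E_6$. Substituting the definitions of the $E_i'$ shows that $E_3', E_4', E_5', E_6'$ commute among themselves, and that the surviving brackets are the $[E_1', E_j']$ and $[E_2', E_j']$ for $j \in \set{3, 4, 5, 6}$. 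Here the occurrence of $E_1$ inside $E_2'$ is what introduces the factor $\alpha$, while the mixing of $E_5$ and $E_6$ inside $E_6'$ introduces $\delta$-proportional cross terms.

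With the brackets in hand the Nijenhuis computation is structured, because in the frame $\set{E_i'}$ the structure $J_t$ is standard ($J E_1' = E_2'$, $J E_3' = E_4'$, $J E_5' = E_6'$). I would therefore compute $N_{J_t}(E_1', E_3')$ and $N_{J_t}(E_1', E_5')$ directly from the definition, and obtain the remaining values from the identities $N_J(JX, Y) = N_J(X, JY) = -J N_J(X, Y)$ already used in Lemma \ref{lemma: N kodaira surface}. Dividing by $4$ and collecting coefficients produces the six forms. In particular $\Theta^1 = \Theta^2 = 0$, since every nonzero bracket lands in the span of $E_3', \ldots, E_6'$, so the torsion can never have an $E_1'$- or $E_2'$-component; for the same reason $\Theta^3, \Theta^4$ receive contributions only from the pairs mixing the first and second blocks, and $\Theta^5, \Theta^6$ only from the first and third, which accounts for the shape of the answer.

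The one genuine difficulty is the bookkeeping in the $E_5', E_6'$ block. In contrast with the surface case, the orthonormalization $E_6' = \frac{\delta}{\sqrt{\mu}} E_5 + \sqrt{\mu} E_6$ mixes the two directions, so $[E_1', E_6']$ and $[E_2', E_6']$ each pick up a $\delta$-proportional $E_5'$-component. Carrying these cross terms through the Nijenhuis tensor is precisely what replaces the naive symmetric coefficients by the combinations $1 - \alpha\delta$ and $\alpha + \delta$ appearing in $\Theta^5$ and $\Theta^6$, as opposed to the cleaner coefficients of $\Theta^3$ and $\Theta^4$. Once these terms are tracked consistently, the stated formulas drop out.
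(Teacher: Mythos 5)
Your proposal is correct and takes essentially the same route as the paper: the paper's proof likewise invokes Corollary \ref{cor: T = 1/4 N} to reduce the torsion to $\frac{1}{4}N_{J_t}$ and then computes the Nijenhuis tensor of $J_t$ by a direct computation in the orthonormal frame. Your bracket bookkeeping (only $[E_1,E_j]$ for $j=3,\ldots,6$ survive, with the $\alpha$- and $\delta$-cross terms coming from $E_2'$ and $E_6'$) and the resulting coefficients, including $1-\alpha\delta$ and $\alpha+\delta$ in $\Theta^5$, $\Theta^6$, check out against the stated formulas.
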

\begin{proof}
Thanks to Corollary \ref{cor: T = 1/4 N}, we only need to compute the Nijenhuis tensor of $J_t$, which can be done by a direct computation.
\end{proof}

\begin{cor}
The (complex) torsion forms for the holomorphic torsion of the canonical connection $\ccon$ on $Y_t$ are
\[\begin{array}{l}
{\Theta^1}' = 0,\\
{\Theta^2}' = \frac{\sqrt{2}}{2\sqrt{\gamma}}(1 + \ii \alpha) \bar{\Phi}^1 \wedge \bar{\Phi}^2,\\
{\Theta^3}' = \frac{\sqrt{2}}{2\sqrt{\gamma}}(1 + \ii \alpha)(1 + \ii \delta) \bar{\Phi}^1 \wedge \bar{\Phi}^3.
\end{array}\]
\end{cor}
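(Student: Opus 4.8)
The plan is to convert the real torsion forms just obtained in the previous Lemma into the complex ones, using the canonical identification $\xi \colon (TN, J_t) \to T^{1,0} N$ of Lemma \ref{lemma: iso connections}. The key observation is that on real vectors $\xi$ is nothing but the projection $\pi^{1,0}$, since $\xi(v) = \frac{1}{2}(v - \ii J_t v) = \pi^{1,0}(v)$. Recalling that the holomorphic torsion is $T^\nabla = \widehat{T^D}$, if I write the real torsion as $T^D = \sum_{j=1}^{6} \Theta^j\, E_j'$ and complexify the form part, I get
\[\tau = T^\nabla = \sum_{j=1}^{6} \widehat{\Theta}^j \tensor \xi(E_j'),\]
where $\widehat{\Theta}^j$ denotes the $\IC$-bilinear extension of the real $2$-form $\Theta^j$.

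First I would record the values of $\xi$ on the orthonormal frame. Since $J_t$ takes the standard form with respect to $\set{E_i'}$, one finds $\xi(E_1') = \frac{\sqrt 2}{2} z_1$ and $\xi(E_2') = \ii\, \xi(E_1') = \frac{\sqrt 2}{2} \ii\, z_1$, and likewise for the pairs $(E_3', E_4')$, $(E_5', E_6')$ with $z_2$, $z_3$. Collecting the coefficients of each $z_i$ then yields the conversion formula
\[{\Theta^i}' = \frac{\sqrt 2}{2} \pa{\widehat{\Theta}^{2i-1} + \ii\, \widehat{\Theta}^{2i}}, \qquad i = 1, 2, 3,\]
from which ${\Theta^1}' = 0$ is immediate, because $\Theta^1 = \Theta^2 = 0$.

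The decisive simplification comes from Remark \ref{rem: type 02}: since $T^\nabla$ is of pure type $(0,2)$, each ${\Theta^i}'$ is a multiple of a single monomial $\bar\Phi^k \wedge \bar\Phi^l$, so I only need to keep the $(0,2)$-part of each $\widehat{\Theta}^j$. I would therefore invert the change of coframe, writing ${E^1}' = \frac{\sqrt 2}{2}(\Phi^1 + \bar\Phi^1)$, ${E^2}' = \frac{\sqrt 2}{2}\ii(\bar\Phi^1 - \Phi^1)$ and so on, and extract from every wedge ${E^k}' \wedge {E^l}'$ only the coefficient of the purely barred term. Substituting into ${\Theta^2}' = \frac{\sqrt 2}{2}(\widehat{\Theta}^3 + \ii\, \widehat{\Theta}^4)$ and into ${\Theta^3}' = \frac{\sqrt 2}{2}(\widehat{\Theta}^5 + \ii\, \widehat{\Theta}^6)$, the cross terms combine so that the coefficients organize into the factors $(1 + \ii\alpha)$ and $(1 + \ii\alpha)(1 + \ii\delta) = (1 - \alpha\delta) + \ii(\alpha + \delta)$, producing exactly the stated expressions.

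The computation is entirely routine once the conversion formula and the type reduction are set up; the only genuine point of care — and the most likely source of a sign error — is the consistent use of $\xi = \pi^{1,0}$ together with the correct inversion of the coframe relations, since a single misplaced factor of $\ii$ would destroy the clean factorization $(1 + \ii\alpha)(1 + \ii\delta)$ that is the whole content of the result.
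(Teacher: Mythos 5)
Your proposal is correct and follows essentially the same route as the paper: both reduce the statement to the conversion formula ${\Theta^i}' = \frac{\sqrt{2}}{2}\pa{\Theta^{2i-1} + \ii\, \Theta^{2i}}$ coming from the frame relations between $\set{E_j'}$ and $\set{z_i, \bar{z}_i}$ (you obtain it via the identification $\xi$, the paper by substituting $E_j'$ expressed in $z_i, \bar{z}_i$ into the complexified torsion and reading off the $z_i$-coefficients), and then substitute the real torsion forms of the preceding Lemma. Your extra appeal to Remark \ref{rem: type 02} to keep only the purely barred terms is a harmless shortcut rather than a different method, since the factorizations such as $({E^1}' - \ii\, {E^2}') \wedge ({E^3}' - \ii\, {E^4}') = 2\, \bar{\Phi}^1 \wedge \bar{\Phi}^2$ produce the stated coefficients $(1 + \ii\alpha)$ and $(1 + \ii\alpha)(1 + \ii\delta)$ directly in any case.
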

\begin{proof}
From the relation
\[\begin{array}{ll}
E_1' = \frac{\sqrt{2}}{2} \pa{z_1 + \bar{z}_1}, & E_2' = \ii \frac{\sqrt{2}}{2} \pa{z_1 - \bar{z}_1},\\
E_3' = \frac{\sqrt{2}}{2} \pa{z_2 + \bar{z}_2}, & E_4' = \ii \frac{\sqrt{2}}{2} \pa{z_2 - \bar{z}_2},\\
E_5' = \frac{\sqrt{2}}{2} \pa{z_3 + \bar{z}_3}, & E_6' = \ii \frac{\sqrt{2}}{2} \pa{z_3 - \bar{z}_3},
\end{array}\]
we can see that the complexified torsion of the canonical connection satisfies
\[T = \Theta^1 \tensor E_1' + \Theta^2 \tensor E_2' + \ldots = \frac{\sqrt{2}}{2} \pa{\Theta^1 + \ii \Theta^2} \tensor z_1 + \frac{\sqrt{2}}{2} \pa{\Theta^1 - \ii \Theta^2} \tensor \bar{z}_1 + \ldots.\]
So the torsion forms for the holomorphic curvature of the canonical connection are
\[\begin{array}{l}
{\Theta^1}' = \frac{\sqrt{2}}{2} \pa{\Theta^1 + \ii \Theta^2},\\
{\Theta^2}' = \frac{\sqrt{2}}{2} \pa{\Theta^3 + \ii \Theta^4},\\
{\Theta^3}' = \frac{\sqrt{2}}{2} \pa{\Theta^5 + \ii \Theta^6}
\end{array}\]
and we can compute them from the knowledge of the real curvature forms.
\end{proof}

Our next step is to compute the connection forms $\theta^i_j$ for the canonical connection $\ccon$, which can be done by solving explicitly the structure equations
\[\left\{ \begin{array}{l}
d\Phi^1 + \theta^1_1 \wedge \Phi^1 + \theta^1_2 \wedge \Phi^2 + \theta^1_3 \wedge \Phi^3 = {\Theta^1}'\\
d\Phi^2 + \theta^2_1 \wedge \Phi^1 + \theta^2_2 \wedge \Phi^2 + \theta^2_3 \wedge \Phi^3 = {\Theta^2}'\\
d\Phi^3 + \theta^3_1 \wedge \Phi^1 + \theta^3_2 \wedge \Phi^2 + \theta^3_3 \wedge \Phi^3 = {\Theta^3}'\\
\theta^i_j + \bar{\theta}^j_i = 0 \qquad (i, j = 1, 2, 3).
\end{array} \right.\]
This is a standard computation, so we prefer to skip all the details and present only the solution in the following Lemma.

\begin{lemma}
Let $Y_t$ be the family of almost K\"ahler deformations of the Nakamura threefold under consideration. The complex connection forms for the canonical connection $\ccon$ of $Y_t$ are then
\[\begin{array}{ll}
\theta^1_1 = 0, & \theta^1_2 = -\frac{\sqrt{2}}{2\sqrt{\gamma}} (1 + \ii \alpha) \Phi^2,\\
\theta^2_2 = 0, & \theta^1_3 = -\frac{\sqrt{2}}{2\sqrt{\gamma}} (1 + \ii \alpha)(1 - \ii \delta) \Phi^3,\\
\theta^2_3 = 0, & \theta^3_3 = \frac{\sqrt{2}}{2\sqrt{\gamma}}(\alpha + \ii)\delta \Phi^1 - \frac{\sqrt{2}}{2\sqrt{\gamma}}(\alpha - \ii)\delta \bar{\Phi}^1.
\end{array}\]
\end{lemma}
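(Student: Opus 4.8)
The plan is to determine the connection forms by solving the first structure equations displayed above the statement, namely
\[d\Phi^i + \sum_{j=1}^3 \theta^i_j \wedge \Phi^j = {\Theta^i}', \qquad \theta^i_j + \bar{\theta}^j_i = 0 \quad (i,j=1,2,3),\]
treating the ${\Theta^i}'$ as known, since they are the torsion forms computed in the preceding Corollary. By the characterization of the canonical connection recalled after Remark \ref{rem: type 02}, it is the \emph{unique} unitary connection whose holomorphic torsion has vanishing $(1,1)$-part; as the ${\Theta^i}'$ are of pure type $(0,2)$, the system above has a unique solution. Hence it suffices to exhibit connection forms solving it and to verify the unitarity relations.

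First I would compute the differentials $d\Phi^1, d\Phi^2, d\Phi^3$ of the unitary coframe and re-express them in the basis of $2$-forms $\set{\Phi^k\wedge\Phi^l,\ \Phi^k\wedge\bar{\Phi}^l,\ \bar{\Phi}^k\wedge\bar{\Phi}^l}$. Starting from the real structure equations \eqref{realstructureequation}, one propagates $d$ through the definition of the orthonormal coframe ${E^i}'$ and then of $\Phi^i$. The decisive simplification is that $\alpha, \beta, \gamma, \delta, \lambda, \mu$ depend only on the parameter $t$ and are therefore constant along $N$, so their differentials vanish; the sole source of non-closedness is the $e^{\pm s}$ factors hidden in $E^3, \ldots, E^6$, encoded by $dE^3 = E^{13}$, $dE^4 = -E^{14}$, $dE^5 = E^{15}$, $dE^6 = -E^{16}$.

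Next I would substitute these expressions, together with the torsion forms ${\Theta^i}'$, into the structure equations and match the coefficients of each wedge monomial. Writing each unknown as $\theta^i_j = \sum_k (a^i_{jk}\Phi^k + b^i_{jk}\bar{\Phi}^k)$, the unitarity relations $\theta^i_j = -\bar{\theta}^j_i$ halve the number of independent coefficients (in particular each diagonal form $\theta^i_i$ is purely imaginary), and the resulting linear system pins them down. Comparing the outcome with the displayed formulas then completes the verification.

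The computation is routine linear algebra, so the real obstacle is purely the bookkeeping: producing $d\Phi^i$ in the unitary coframe and tracking the many coefficients without error. One consistency point deserves attention: the $(2,0)$- and $(1,1)$-parts of $d\Phi^i$ must be absorbable by the $\theta^i_j \wedge \Phi^j$ terms, leaving only the prescribed $(0,2)$-torsion, which is exactly what the quasi-K\"ahler condition guarantees via Corollary \ref{cor: T = 1/4 N} and Remark \ref{rem: type 02}. Checking that this forces the entries into the stated shape — in particular that $\delta$ survives only in $\theta^1_3$ and $\theta^3_3$ — is the one place where the geometric structure, rather than mere arithmetic, carries the argument.
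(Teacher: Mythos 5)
Your proposal is correct and follows essentially the same route as the paper: the authors likewise determine the $\theta^i_j$ by solving the first structure equations with the torsion forms ${\Theta^i}'$ from the preceding Corollary, subject to the unitarity relations $\theta^i_j + \bar{\theta}^j_i = 0$, and (like you) suppress the coefficient-matching computation as routine. Your additional appeal to the uniqueness of the canonical connection, i.e.\ that exhibiting any unitary solution with the prescribed $(0,2)$-torsion suffices, is a sound justification of the same verification scheme.
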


From the knowledge of the connection forms, we can deduce the curvature forms via the second structure equations
\[\Psi = (\Psi^i_j) = d\theta + \theta \wedge \theta, \qquad \text{where } \theta = (\theta^i_j).\]
The result is
\[\begin{array}{l}
\Psi^1_1 = -\frac{1}{2\gamma}(1 + \alpha^2) \Phi^2 \wedge \bar{\Phi}^2 - \frac{1}{2\gamma}(1 + \alpha^2)(1 + \delta^2) \Phi^3 \wedge \bar{\Phi}^3,\\
\Psi^1_2 = -\frac{1}{2\gamma}(1 + \alpha^2) \Phi^1 \wedge \bar{\Phi}^2 - \frac{1}{2\gamma}(1 + \ii \alpha)^2 \bar{\Phi}^1 \wedge \bar{\Phi}^2,\\
\Psi^1_3 = \frac{1}{2\gamma}(1 + \alpha^2)(1 + \delta^2) \Phi^1 \wedge \bar{\Phi}^2 + \frac{1}{2\gamma}(1 + \ii \alpha)^2(1 + \delta^2) \bar{\Phi}^1 \wedge \bar{\Phi}^3,\\
\Psi^2_2 = \frac{1}{2\gamma}(1 + \alpha^2) \Phi^2 \wedge \bar{\Phi}^2,\\
\Psi^2_3 = \frac{1}{2\gamma}(1 + \alpha^2)(1 - \ii \delta) \Phi^3 \wedge \bar{\Phi}^2,\\
\Psi^3_3 = \frac{1}{2\gamma}(1 + \alpha^2)(1 + \delta^2) \Phi^3 \wedge \bar{\Phi}^3,
\end{array}\]
and the other curvature forms are deduced from these thanks to the relation $\Psi^i_j + \bar{\Psi}^j_i = 0$.

Recall that the $k\bar{l}$-component of the Ricci curvature of the canonical connection is expressed by
\[R_{k\bar{l}} = \sum_{i = 1}^3 R^i_{ik\bar{l}}, \qquad \text{with } (\Psi^i_j)^{1, 1} = \sum_{k, l} R^i_{jk\bar{l}} \Phi^i \wedge \bar{Phi}^j.\]
We have then no problems with proving the following result.

\begin{thm}\label{thm: scal nakamura}
Let $Y_t$ be the family of almost K\"ahler deformations of the Nakamura threefold under consideration. For every value of the parameter $t$, the canonical connection $\ccon$ on $Y_t$ is Ricci-flat, and in particular its scalar curvature vanishes.
\end{thm}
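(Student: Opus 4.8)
The plan is to exploit the fact that the Ricci curvature of $\ccon$ is nothing but the trace of the curvature matrix $\Psi = (\Psi^i_j)$ exhibited just above. By the definition recalled in Section \ref{sect: recaps}, the coefficients $R^i_{jk\bar l}$ are read off the $(1,1)$-part of $\Psi^i_j$ through $(\Psi^i_j)^{1,1} = \sum_{k,l} R^i_{jk\bar l}\,\Phi^k \wedge \bar{\Phi}^l$, and the Ricci tensor is $R_{k\bar l} = \sum_{i=1}^3 R^i_{ik\bar l}$. Assembling these into the Ricci $(1,1)$-form $\rho := \sum_{k,l} R_{k\bar l}\,\Phi^k \wedge \bar{\Phi}^l$, one sees at once that $\rho$ is exactly the $(1,1)$-part of the trace $\sum_{i=1}^3 \Psi^i_i$. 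Hence it suffices to show that the three diagonal curvature forms $\Psi^1_1, \Psi^2_2, \Psi^3_3$ sum to zero.

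First I would observe that each of $\Psi^1_1$, $\Psi^2_2$ and $\Psi^3_3$ is already of pure type $(1,1)$, so no projection is needed and $\rho = \Psi^1_1 + \Psi^2_2 + \Psi^3_3$. Then I would simply add the three forms: the term $-\frac{1}{2\gamma}(1+\alpha^2)\,\Phi^2 \wedge \bar{\Phi}^2$ occurring in $\Psi^1_1$ is cancelled by $\Psi^2_2 = \frac{1}{2\gamma}(1+\alpha^2)\,\Phi^2 \wedge \bar{\Phi}^2$, while the term $-\frac{1}{2\gamma}(1+\alpha^2)(1+\delta^2)\,\Phi^3 \wedge \bar{\Phi}^3$ of $\Psi^1_1$ is cancelled by $\Psi^3_3$. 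Since no diagonal curvature form contains a $\Phi^1 \wedge \bar{\Phi}^1$ term, nor any mixed term $\Phi^k \wedge \bar{\Phi}^l$ with $k \neq l$, the sum vanishes identically; every component $R_{k\bar l}$ is therefore zero and $\Ric(\ccon) \equiv 0$. The vanishing of $\scal(\ccon) = \sum_k R_{k\bar k}$ is then immediate, being a further trace of the already-trivial Ricci tensor.

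The substance of the argument lies entirely upstream, in the passage from the structure equations to the connection forms $\theta^i_j$ and thence to the curvature forms $\Psi^i_j$ — a computation reported as standard and suppressed in the text. Granted those formulas, the theorem reduces to the elementary cancellation above, and the only point worth flagging is conceptual rather than computational: Ricci-flatness holds for \emph{every} parameter $t$, including those with $\delta(t) \neq 0$ where, by Theorem \ref{thm: kod dim}, the Kodaira dimension drops to $-\infty$. I expect no genuine obstacle in this final step; the care required is merely in confirming that the pairwise cancellation is exact and independent of $t$, which it manifestly is once the diagonal entries of $\Psi$ are in hand.
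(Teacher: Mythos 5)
Your proposal is correct and follows the paper's own route: the paper likewise takes the displayed curvature forms $\Psi^i_j$ and concludes by the definition $R_{k\bar l} = \sum_i R^i_{ik\bar l}$, i.e., exactly the observation that $\Ric(\ccon)$ is the $(1,1)$-part of $\operatorname{tr}(\Psi) = \Psi^1_1 + \Psi^2_2 + \Psi^3_3$, which vanishes by the pairwise cancellation you describe. Your write-up simply makes explicit the trace computation that the paper leaves implicit in its one-line proof.
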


\begin{rem}
As it was mentioned in the Introduction, we can see that the family of almost K\"ahler structures on the differentiable manifold underlying the Nakamura threefold we are considering has the following properties:
\begin{enumerate}
\item there are members of this family having Kodaira dimension $0$ and $-\infty$;
\item the canonical connection of all the members has vanishing Ricci curvature.
\end{enumerate}
Such a behaviour in the integrable case was pointed out in by Tosatti in \cite[Example 3.2]{Tosatti}, based on the original work of Nakamura (see \cite{Nak}).
\end{rem}

\bibliographystyle{alpha}
\bibliography{Kodaira_dimension_and_curvature}

\end{document}